%%
%% Twistor AC Structures
%%
%\def\lastedit{Last edit 2021-04-21 by jhr}
%% 
% version 2 of 2010.04780 minor corrections 
%To appear in the Journal of Geometric Mechanics

\documentclass[a4paper,11pt,reqno]{amsart}
\usepackage[british]{babel}
\usepackage{color}
\usepackage[utf8]{inputenc}
\usepackage[paperwidth=210mm]{geometry}
\usepackage{amsfonts,amssymb} % For bbb and frak and circle arrow
\usepackage{mathrsfs}
\usepackage{stmaryrd}

\setcounter{tocdepth}{2}
%%
%% Set page size to fill larger area for preprints
%%
%\setlength{\topmargin}{-5mm}
%\setlength{\textheight}{230mm}
%\setlength{\textwidth}{165mm}
%\setlength{\oddsidemargin}{-2mm}
%\setlength{\evensidemargin}{-2mm}

%% Spread out for proofreading
%\renewcommand{\baselinestretch}{1.25}

%%
%% Theorem-like environments
%%
\theoremstyle{plain}
\newtheorem{theorem}{Theorem}[section] 
%\renewcommand{\theequation}{\thesubsection.\arabic{equation}}
% use \setcounter{equation}{0} to reset at start of each subsection 
% to simulate numberwithin and avoid amsthm
\newtheorem{lemma}[theorem]{Lemma}
\newtheorem{lemmadef}[theorem]{Lemma/Definition}
\newtheorem{proposition}[theorem]{Proposition}

%
%% These need the `theorem\prime  package to set the \theorembodyfont
\theoremstyle{definition}
\newtheorem{definition}[theorem]{Definition}
\theoremstyle{remark}
\newtheorem{remark}[theorem]{Remark}

\numberwithin{equation}{section}

%%
%%
%% Maths macros
%%
\makeatletter
\def\operatorname#1{\mathop{\operator@font #1}\nolimits}%
\makeatother
%%
%% use a vertical bar in set definitions \{x \i X \suchthat ...\}
%% To do this using \left...\right use \mid
%%
\newcommand{\suchthat}{\mathop{\,\vert\,}}
\newcommand{\half}{\tfrac12} % Text style half
%%
%% Symbols
%%
\newcommand{\bbC}{\mathbb{C}}
\newcommand{\bbR}{\mathbb{R}}
\newcommand{\CC}{\mathcal{R}}
\newcommand{\EE}{\mathcal{E}}
\newcommand{\calV}{\mathscr{V}}
\newcommand{\WW}{\mathcal{W}}
\newcommand{\calH}{\mathcal{H}}
\newcommand{\scrV}{\mathscr{V}}
\newcommand{\scrW}{\mathscr{W}}
\newcommand{\frako}{\mathfrak{o}}

\newcommand{\fraksp}{\mathfrak{sp}}
\newcommand{\frakgl}{\mathfrak{gl}}
\newcommand{\frakg}{\mathfrak{g}}
 % Champs de vecteurs

%

\newcommand{\ad}{\operatorname{ad}}

\newcommand{\Tr}{\operatorname{Tr}}

\newcommand{\End}{\operatorname{End}}

\newcommand{\id}{\operatorname{Id}}

\newcommand{\Ker}{\operatorname{Ker}}

\newcommand{\Image}{\operatorname{Image}}
\newcommand{\Id}{\operatorname{Id}}
\newcommand{\ric}{\operatorname{Ric}}

\newcommand*{\cyclic}{\mathop{\kern0.9ex{{+}\kern-2.2ex\raise-.29ex%
  \hbox{\Large\hbox{$\circlearrowright$}}}}\limits}
\newcommand{\eqnref}[1]{(\ref{#1})}
\let\ul\underline
\let\wt\widetilde
\newcommand{\longto}{\longrightarrow}
%% Use \Lambda as a large \Wedge
\let\Wedge\Lambda

%%% BEGIN %%%
\begin{document}
%%
%% Top matter
%%
\title{On Twistor Almost Complex Structures}

\author{Michel Cahen}
\address{D\'epartement de Math\'ematique, Universit\'e Libre de Bruxelles,
Campus Plaine, CP 218, Boulevard du Triomphe, BE-1050 Bruxelles, Belgium.}
\email{mcahen@ulb.ac.be}

\author{Simone Gutt}
\address{D\'epartement de Math\'ematique, Universit\'e Libre de Bruxelles,
Campus Plaine, CP 218, Boulevard du Triomphe, BE-1050 Bruxelles, Belgium.\hfill~\linebreak
\indent Universit\'e de Lorraine,
Institut Elie Cartan de Lorraine, UMR 7502,
Ile du Saulcy, F-57045 Metz, France.}
\email{sgutt@ulb.ac.be}
\thanks{M.~Cahen et S.~Gutt sont Membres de l'Acad\'emie Royale de Belgique\hfill\ \linebreak
\hbox{~~~~To appear in the Journal of Geometric Mechanics}}

\author{John Rawnsley}
\address{Mathematics Institute, Zeeman Building,
University of Warwick, Coventry CV4 7AL, United Kingdom}
\email{j.rawnsley@warwick.ac.uk}

\dedicatory{Dedicated to our friend Kirill Mackenzie}

\subjclass{Primary 53C15, 53C28; Secondary 53D99}

\keywords{Riemannian and Symplectic Geometry, almost complex structures, twistor spaces}

\begin{abstract}
In this paper we look at the question of integrability, or not, of the two
natural almost complex structures $J^{\pm}_\nabla$ defined on the twistor space
$J(M,g)$ of an even-dimensional manifold $M$ with additional structures $g$ and
$\nabla$ a $g$-connection. We also look at the question of the compatibility of
$J^{\pm}_\nabla$ with a natural closed $2$-form $\omega^{J(M,g,\nabla)}$ defined
on $J(M,g)$. For $(M,g)$ we consider either a pseudo-Riemannian manifold,
orientable or not, with the Levi Civita connection or a symplectic manifold with
a given symplectic connection $\nabla$. In all cases $J(M,g)$ is a bundle of
complex structures on the tangent spaces of $M$ compatible with $g$ and we
denote by $\pi \colon J(M,g) \longrightarrow M$ the bundle projection. In the
case $M$ is oriented we require the orientation of the complex structures to be
the given one. In the symplectic case the complex structures are positive. 

The linear connection $\nabla$ on $M$ defines a horizontal space
$\calH^\nabla_j\simeq T_{\pi(j)}M$ at any point $j$ in the twistor space  so
that $T_jJ(M,g)$ is isomorphic to $\calH^\nabla_j\oplus \scrV_j$ where $\scrV_j
= \Ker \pi_{*j} $ is the vertical space at $j$. Since both $\scrV_j$ and
$TM_{\pi(j)}$ carry complex structures defined by $j$, they add together to give
the complex structure denoted by $(J^+_\nabla)_j$ on $T_jJ(M,g)$. The almost
complex structure denoted $(J^{-}_\nabla)_j$ is defined by reversing the sign on
the horizontal space.

We examine the integrability, or not, of the  $J^{\pm}_\nabla$ by looking at
their Nijenhuis tensors $N^{J^\pm_\nabla}$ and measure their non-integrability
by the dimension of the span of the values of $N^{J^\pm_\nabla}$.

The natural closed $2$-form $\omega^{J(M,g,\nabla)}$ is defined on the twistor
space as the trace of the curvature of a connection $D^E$ defined on the
pull-back bundle bundle $E=\pi^{-1}TM$. This bundle $E$ is endowed with  the
complex vector bundle structure defined by the natural section $\Phi$ of
$\End(E)$ whose value at $j$ is $j$,    and the connection $D^E$, built from
the pullback connection $\pi^{-1}\nabla^E$, satisfies $D^{\End E}\Phi=0$. We
recall, as in Reznikov \cite{bib:Reznikov}, when this $2$-form is symplectic in
the pseudo-Riemannian setting  and we determine, in the pseudo-Riemannian and in
the symplectic setting,  when $\omega^{J(M,g,\nabla)}$ is of type $(1,1)$ with
respect to $J^\pm_\nabla$. 
\end{abstract}

\maketitle
\thispagestyle{empty}

\newpage
   
\tableofcontents

\section*{Introduction}

A twistor space over a manifold $M$ is a fibre bundle $\pi \colon Z \longto M$
where each fibre is a complex manifold and each point $z$ in $Z$ defines a
complex structure $J(z)$ on the tangent space $TM_{\pi(z)}$ ($M$ must be
even-dimensional for this to be possible). An example is the bundle $J(M)$ of
all complex structures $j$ on the tangent spaces of $M$. The case of interest
here is  the bundle $\pi \colon J(M,g) \longto M$  of complex structures on the
tangent spaces compatible with some geometric structure $g$ such as a
pseudo-Riemannian metric (with an orientation or not) or a symplectic structure.
Where we can we will treat those results common to the three cases together. The
presentation we give  of twistor spaces follows the Riemannian case in
O'Brian--Rawnsley \cite{bib:OB-R}.

A linear connection $\nabla$ on $M$ preserving $g$ defines a horizontal space
$\calH^\nabla_j$ at $j$ so that $T_jJ(M,g)$ is isomorphic to $T_{\pi(j)}M\oplus
\scrV_j$ where $\scrV_j = \Ker \pi_* \colon T_jJ(M,g) \longto T_{\pi(j)}M$ is
the vertical space at $j$. Since both $\scrV_j$ and $TM_{\pi(j)}$ carry complex
structures defined by $j$ (which we recall in Section \ref{section:Jpm}), they
add together to give a complex structure $(J^+_\nabla)_j$ on $T_jJ(M,g)$. This
almost complex structure $J^{+}_\nabla$ on $J(M,g)$ can sometimes be integrable
producing a complex manifold which has been  used in the pseudo-Riemannian
setting to convert the Yang--Mills equations on $M$ into the Cauchy--Riemann
equations on $J(M,g)$ in the $4$-dimensional case, see \cite{bib:AHS}. Some
twistor spaces over Riemannian manifolds have been a source of examples of
non-K\"ahlerian symplectic manifolds \cite{bib:Fine-Panov,bib:Fine-Panov2,bib:Reznikov}.

A second almost complex structure $J^{-}_\nabla$ can be defined by reversing the
sign on the horizontal bundle. This has had many uses in the study of harmonic
maps of Riemann surfaces into $M$ when $M$ has a Riemannian structure $g$ and
$\nabla$ is the Levi Civita connection of $g$ \cite{bib:EellsSalamon}.

In this paper we  look at the question of integrability, or not, of
$J^{\pm}_\nabla$ and, when not integrable, examine their Nijenhuis tensors
$N^{J^\pm_\nabla}$ to see how non-integrable they are, using as a measure of
their non-integrability the dimension of the span of the values of
$N^{J^\pm_\nabla}$, as  in \cite{bib:CGHG}.

The bundle $\End(E)$, where $E$ is the pull-back bundle $E=\pi^{-1}TM$ (which is
isomorphic to the horizontal bundle $\calH^\nabla$ via $\pi_*$), has a section
$\Phi$ whose value at $j$ is $j$. This makes $(E,\Phi)$ into a complex vector
bundle with the multiplication by $\sqrt{-1}$ given by $\Phi$.  This complex
vector bundle has Chern classes $c_i(E,\Phi)$ in the de Rham cohomology of
$J(M,g)$ represented by polynomials in the curvature of a connection on $E$
preserving $\Phi$. From the pullback connection $\pi^{-1}\nabla$, we get such a
connection on $E$, called $D^E$, and construct a closed 2-form
$\omega^{J(M,g,\nabla)}$ as the trace of the curvature of $D^E$. We write the
conditions for this $2$-form to be symplectic  and we determine when
$\omega^{J(M,g,\nabla)}$ is of type $(1,1)$ with respect to $J^\pm_\nabla$.

\medskip
\noindent The results in the pseudo-Riemannian context include the following:

The almost complex structure  $J^+_\nabla$ is integrable in the
pseudo-Riemannian context with no given orientation if and only if the Weyl
component $C^\nabla$ of the Riemann curvature $R^\nabla$ vanishes (this is well
known and proven in 

Proposition \ref{prop:pseudoRnonor}).

In the pseudo-Riemannian context with a  given orientation, the results holds
true (as is well known) in dimension $>4$: $J^+_\nabla$ is integrable  if and
only if the Weyl component of the Riemann curvature vanishes, whether in
dimension $4$ it is integrable if and only if the Weyl component of the Riemann
curvature tensor is self-dual when the signature is $(4,0)$ or $(0,4)$ and
anti-self-dual when the signature is $(2,2)$ (Proposition \ref{prop:pseudoRor}).

The almost complex structure $J^-_\nabla$ is never integrable and the image of
its Nijenhuis tensor always include the horizontal space: $\Image
N^{J^-_\nabla}_j\supset \calH^\nabla_j$.

If the space has non-vanishing constant sectional curvature, then the image of
the Nijenhuis tensor associated to $J_\nabla^-$ is the whole tangent space
$T_jJ(M,g)$ at any point $j\in J(M,g)$.

More generally in the Riemannian case (Proposition \ref{prop:pinchedcurv}),
given any positive integer $n$, there exists an $\epsilon(n)$ such that, if the
sectional curvature of a Riemannian manifold $(M,g)$ of dimension $2n$ is
$\epsilon(n)$-pinched, the almost complex structure $J^-_\nabla$ on the twistor
space, defined using the Levi Civita connection $\nabla$, is maximally
non-integrable (i.e. the image of the corresponding Nijenhuis tensor is the
whole tangent space at every point).

Each of the complex structures $J^\pm_\nabla$ is compatible with the closed
2-form $\omega^{J(M,g,\nabla)}$ if and only if the same condition as the
integrability of $J^+_\nabla$ is satisfied, i.e. $\omega^{J(M,g,\nabla)}$ is of
type $(1,1)$ with respect to $J^+_\nabla$ (and automatically also to
$J^-_\nabla$) if and only if $C^\nabla=0$ in the pseudo-Riemannian context with
no orientation, or with an orientation if $\dim M>4$ and if and only if the Weyl
component of the Riemann curvature tensor is self-dual when the signature is
$(4,0)$ or $(0,4)$ and anti-self-dual when the signature is $(2,2)$ (Proposition
\ref{prop:Appendix}).

\medskip
\noindent The results in the symplectic context include the following:

The almost complex structure $J^+_\nabla$ on the twistor space $J(M,\omega)$ of
a symplectic manifold $(M,\omega)$ of dimension $2n\ge 4$, defined using a
symplectic connection $\nabla$, is integrable if and only if the curvature of
$\nabla$ is of Ricci-type (this was known and is proven in Proposition
\ref{prop:sympl1}).

The almost complex structure $J^-_\nabla$ is never integrable and the image of
its Nijenhuis tensor at the point $j$ always include the horizontal space
$\calH^\nabla_j$.

 The closed $2$-form $\omega^{J(M,\omega,\nabla)}$ is of type $(1,1)$ for each
 of the $J^\pm_\nabla$ if and only if  again the same condition as the
 integrability of $J^+_\nabla$ is satisfied, i.e. the curvature $R^\nabla$ is of
 Ricci type (Proposition \ref{prop:sympl}).

\section*{Acknowledgement}
JR has the pleasure of thanking MC and SG for their hospitality in Brittany
where part of this work was done. This work benefited from the project
``Symplectic techniques in differential geometry", funded by the ``Excellence of
Science (EoS)" program 2018--2021 of the FWO/F.R.S-FRNS. 

\section{Description of the twistor bundle}\label{section:twistor}

Let $(M,g)$ be a $2n$-dimensional manifold endowed with a structure $g$ which
can be either a (pseudo)-Riemannian structure of signature $(2p,2q)$ where
$n=p+q$, with an orientation or not, or a symplectic structure, or having no
extra structure. 

Let $F(M,g)\rightarrow M$ denote the corresponding frame bundle
where a frame at a point $p\in M$ is  a map $\xi: V \rightarrow T_pM$ which is a
linear  isomorphism from $V=\bbR^{2n}$, endowed with a standard structure
$\wt{g}_0$,  to $(T_pM,g_p)$, where
$\wt{g}_0=\begin{pmatrix}I_{p,q}&0\\0&I_{p,q}\end{pmatrix}$ with
$I_{p,q}=\begin{pmatrix}I_{p}&0\\0&-I_{q}\end{pmatrix}$ in the pseudo-Riemannian
case, with an orientation or not, and
$\wt{g}_0=\Omega_0=\begin{pmatrix}0&I_n\\-I_n&0\end{pmatrix}$ in the symplectic
case. 

The frame bundle is a principal bundle with structure group
\[
G=Gl(V,\wt{g}_0)=\left\{\begin{array}{l}
O(V,\wt{g}_0)\simeq O(2p,2q;\bbR) \textrm{ in the pseudo-Riemannian setting;}\\
SO(V,\wt{g}_0)\textrm{ when there is furthermore an orientation;}\\
Sp(V,\Omega_0) \textrm{ in the symplectic case;}\\
Gl(V)=Gl(2n,\bbR) \textrm{ if there is no extra structure on }M.\\
\end{array}\right.
\]

The twistor bundle,  $J(M,g)\stackrel{\pi}{\rightarrow}M$, is the bundle whose
fibre over a point $p$ of $M$ consists of all complex structures $j$ on $T_pM$
which are compatible with $g_p$  in the sense  that there is a frame  at the point
$p$, $\xi$ in the fibre  $F(M,g)_p$, in which the complex structure can be
written $j=\xi\circ \wt{j_0}\circ \xi^{-1}$ where
$\wt{j_0}:=\begin{pmatrix} 0&-\id_n\\ \id_n & 0\end{pmatrix}$ (so
we mean in particular positive compatible almost complex structures in the
symplectic case, and we mean that $j_0$ is compatible with the orientation when
an orientation is given in the pseudo-Riemannian case).

Observe that a complex structure $\wt{j}$ on $V$  is compatible with
$\wt{g}_0$ if  there exists a  basis of $V$, compatible with
$\wt{g}_0$, in which the matrix associated to $\wt{j}$ is
$\wt{j_0}$, hence $\wt{j}=A\wt{j_0} A^{-1}$  with $A\in
G=Gl(V,\wt{g}_0)$ and the space of such complex structures identifies
with $Gl(V,\wt{g}_0)/ Gl(V,\wt{g}_0,\wt{j_0})$ with
\[
Gl(V,\wt{g}_0,\wt{j_0})=\left\{A\in G \mid A \wt{j_0} 
= \wt{j_0} A \right\}\simeq
\left\{\begin{array}{l}U(p,q)  \textrm{ in the pseudo-Riemannian setting;}\\
U(n) \textrm{ in the symplectic case;}\\
Gl(n,\bbC)\textrm{ if there is no extra structure.}\end{array}\right.
\]

The twistor bundle  $J(M,g)$ can thus be seen as a quotient of the frame bundle:
\begin{equation}
J(M,g)= F(M,g)\times _G\left(G/ Gl(V,\wt{g}_0,\wt{j_0})\right)
=F(M,g)/Gl(V,\wt{g}_0,\wt{j_0})
\end{equation}
and we shall denote by $\pi_1$ the natural projection (giving a 
$Gl(V,\wt{g}_0,\wt{j_0})$-principal bundle structure):
\begin{equation}\label{eq:pi1}
\pi_1: F(M,g)\rightarrow J(M,g)
=F(M,g)/Gl(V,\wt{g}_0,\wt{j_0}): 
\xi \mapsto j=\xi \circ \wt{j_0}\circ \xi^{-1}.
\end{equation}

%%%%%%%%%%%%%%%%%%%%%%%%%%%%%%%

\section{Almost complex structures on  the twistor space}\label{section:Jpm}
 
We shall denote by $\calV$ the vertical tangent bundle to the twistor space
\[
\calV_j:=\Ker \pi_{*j}.
\]
Note that a vector in $T_jJ(M,g)$ is vertical if and only if it is tangent to
the fibre, i.e. tangent to a curve $j_t$ of compatible complex structures on
$T_pM$, with $p=\pi(j)$ and $j_0=j$; hence:
\begin{eqnarray}
\calV_j&=&\{ S\in \End (T_pM)\, \vert \, Sj+jS=0,\, g_p(SX,Y)+g_p(X,SY)=0,
\, \forall X,Y \in T_pM\, \}\nonumber \\
&=&\{ [j,S^\prime ]  \,\vert\, S^\prime \in \End (T_pM) \textrm{ and }\, g_p(S^\prime X,Y)
+g_p(X,S^\prime Y)=0
\,\} \label{eq:[j,s]}.
\end{eqnarray}
(indeed, given $S$ in the first set, one can define $S^\prime =\half Sj$ in the 
second set). Let us denote by  $\End(TM,g)$ the bundle of infinitesimal isometries of
the tangent bundle:
\begin{equation}
\End(TM,g)_p:=\{ S\in \End (T_pM)\, 
\vert \, g_p(SX,Y)+g_p(X,SY)=0, \, \forall X,Y \in T_pM\, \}
\end{equation}
and consider the pullback bundles over $J(M,g)$:
\begin{eqnarray}
E:&=&\pi^{-1}TM=\{ (j,X)\in J(M,g)\times TM \, 
\vert \, X\in T_pM \, \textrm{ with } p=\pi(j)\, \}\\
\End (E,g):&=&\pi^{-1} \End(TM,g)=\{ (j,S), \,  j\in J(M,g)_p, \, 
S\in \End(TM,g)_p,\, p\in M \}\nonumber .
\end{eqnarray}
Clearly $\calV$ is a subbundle of $\End (E,g)$. The  canonical section
\begin{equation}\label{eq:defPhi}
\Phi: J(M,g)\rightarrow  \End (E,g): j\mapsto \Phi(j):=(j,j)
\end{equation}
defines the canonical (tautological) complex structure in the bundle $E$.
Using equation \eqnref{eq:[j,s]}, we can write
\[
\calV= [ \Phi,  \End (E,g)].
\]
We  have a short exact sequence of bundles over $J(M,g)$:
\[
0\rightarrow \calV\hookrightarrow 
TJ(M,g)\stackrel{\pi_*}{\rightarrow} E\rightarrow 0.
\]
The datum of a linear connection $\nabla$ on $M$ which preserves the structure
$g$ (i.e. $\nabla g=0$) gives a splitting
\[
TJ(M,g)_j=\calH^\nabla_j\oplus \calV_j
\]
where the horizontal space $\calH^\nabla_j$ is the projection by
$\pi_{1*\xi}$ of the horizontal subspaces in the frame bundle:
$H^\nabla_\xi=\Ker \alpha^\nabla_\xi$ where $\alpha^\nabla$ is the Lie algebra
$\frakg$-valued connection $1$-form on $F(M,g)$ associated to $\nabla$, with
$\frakg=\frako(2p,2q,\bbR)$, $\fraksp(V,\Omega_0)$ or $\frakgl(2n,\bbR)$.

Since $\pi_{*j}\vert_{\calH^\nabla_j}:\calH^\nabla_j\rightarrow
T_{p=\pi(j)}M$  is an isomorphism, this splitting gives an isomorphism of
bundles over $J(M,g)$:
\begin{equation}
TJ(M,g)=\calH^\nabla\oplus \calV\simeq E\oplus \calV
= E \oplus [\Phi, \End (E,g)] \subset E\oplus \End(E,g),
\end{equation}
the projection of $TJ(M,g)$ on $E$ being given by $\pi_*$.

Two natural almost complex structures ${J_{\nabla}^{\pm}}$ are defined on
$J(M,g)$ by:
\begin{equation}
\left.\left({J_{\nabla}^{\pm}}\right)_{j}\right\vert_{\calV_j}(S)
= j\circ S,\qquad
\left.\left({J_{\nabla}^{\pm}}\right)_{j}\right\vert_{\calH^\nabla_j}
=\pm(\left.\pi_{*j}\right\vert_{\calH^\nabla_j})^{-1}\circ j 
\circ (\left.\pi_{*j}\right\vert_{\calH^\nabla_j}).
\end{equation}
In other words,
\[
\left.{J_{\nabla}^{\pm}}\right\vert_{ \calV % \subset \End (E,g)
}
=\Phi\cdot \quad 
\] 
is left multiplication by $\Phi$ on $\calV$ viewed as a subbundle of
$\End(E,g)$ and
\[
{J_{\nabla}^{\pm}}\vert_E=\pm\Phi 
\]
with $\Phi$ as a section of $\End(E,g)$ acting on sections of $E$.

The almost complex structure $J_\nabla^+$ was used by Atiyah et al
\cite{bib:AHS} and the structure $J_\nabla^-$ was introduced by Eells and
Salamon \cite{bib:EellsSalamon} as a first  example of geometrically natural
non-integrable almost complex structure.

 %%%%%%%%%%%%%

\subsection{Pullback connection and projection  on  the vertical bundle $\calV$}
\label{section:projection}

The pullback connection $\pi^{-1}\nabla^E$ on $E$ is induced by the connection
$1$-form $p_2^*\alpha^\nabla$ on the pullback bundle $\pi^{-1}F(M,g)$, with
$$p_2:\pi^{-1}F(M,g)\subset J(M,g)\times F(M,g)\rightarrow F(M,g)$$ the
projection on the second factor. We denote by $p_1$ the bundle projection, i.e.
the projection on the first factor $p_1:\pi^{-1}F(M,g)\subset J(M,g)\times
F(M,g)\rightarrow J(M,g)$.

Now $F(M,g)$ injects in $\pi^{-1}F(M,g)$ via $$i: F(M,g)\rightarrow
\pi^{-1}F(M,g): \xi \mapsto (\pi_1(\xi),\xi)$$ and
$i^*(p_2^*\alpha^\nabla)=\alpha^\nabla$.

The  pullback $E^\prime $  of a vector bundle  associated with $F(M,g)$ for the
representation $\rho$ of $G$ on $W$ (for instance $E^\prime =E$ or $\End(E,g)$) can be
written as,
\[
E^\prime :=\pi^{-1}\left(F(M,g)\times_{G,\rho}W\right)
=\pi^{-1}F(M,g)\times_{G,\rho}W\stackrel{p_1}{\longrightarrow} J(M,g).
\]
A section $s$ of $E^\prime $ can be viewed as the $G$-equivariant function $\tilde{s}$
on the $G$-principal bundle $\pi^{-1}F(M,g)$ with values in $W$ so that
$s(j)=[(j,\xi), \tilde{s}(j,\xi)]$. It is completely determined by its
restriction  ${\widehat{s}}:=i^*\tilde{s}$ defined on $F(M,g)$. Then
\begin{eqnarray}
\wt{ (\pi^{-1}\nabla)^{E^\prime }_{\Xi_j} s}(j, \xi)
&=&\overline{\Xi}_{(j,\xi)}\tilde{s} \,\textrm{ where }\, 
(p_2^*\alpha^\nabla)(\overline{\Xi})=0 
\,\textrm{ and }\,p_{1*}(\overline{\Xi})=\Xi \\
&=&\frac{d}{dt}\tilde{s}(j(t),\xi^\prime (t))\vert_{t=0}
\end{eqnarray}
with $j(t)$ a curve in $J(M,g)$ representing $\Xi_j \in T_jJ(M,g)$ and
$\xi^\prime (t)$ a curve in $F(M,g)$ representing 
$\left(\overline{\pi_{*j}\Xi}\right)_\xi$, the horizontal lift in 
$H_\xi^\nabla\subset T_\xi F(M,g)$ of  ${\pi_{*j}\Xi} \in T_pM$, both curves
projecting on the same curve $p(t)$ in $M$. This implies, since
$X_\xi-\left(\alpha^\nabla_\xi(X_\xi)\right)^*_\xi$ is horizontal, for any
$X_\xi \in T_\xi F(M,g)$, with $A^*$ the fundamental vector field associated to
the right action of $G$ on $F(M,g)$ (i.e. $A^*_\xi=\frac{d}{dt}\xi\circ \exp
tA\vert_{t=0}$) for any $A\in \frakg$, and since $\wt{s}(\xi \exp
tA)=\rho(\exp -tA)\wt{s}(\xi)$:
\begin{eqnarray}
({ \widehat{ (\pi^{-1}\nabla)^{E^\prime }_{\pi_{1*_\xi }X_\xi} s}})(\xi)
&=& (\wt{ (\pi^{-1}\nabla)^{E^\prime }_{\pi_{1*_\xi }X_\xi} s})(\pi_1\xi,\xi)
\nonumber\\
&=& \frac{d}{dt} \wt{  s} (\pi_1\xi(t),\xi(t))\vert_{t=0} 
-\frac{d}{dt} \wt{s}(\pi_1(\xi),\xi \circ 
\exp t\left(\alpha^\nabla_\xi(X_\xi)\right))\vert_{t=0}\nonumber\\
&& \textrm{ where } \xi(t)  \textrm{ is a curve in  } F(M,g)  
\textrm{ representing  } X_\xi \nonumber\\
&=&X_\xi { \widehat{s}} +\rho_*(\alpha^\nabla_\xi(X_\xi)) ({ \widehat{s}}(\xi))
\end{eqnarray}
Observe that the function ${\wt{\Phi}}$ on $\pi^{-1}F(M,g)$ corresponding
to the canonical section $\Phi$ of $\End (E,g)$ is given by
${\wt{\Phi}}(j,\xi)= \xi\circ j\circ\xi^{-1}$ so that its restriction to
$F(M,g)$ is the constant function ${\widehat{\Phi}}(\xi)=\wt{j_0}$. Hence
\begin{equation}
({ \widehat{ (\pi^{-1}\nabla)^{\End(E,g)}_{\,\pi_{1*_\xi }X_\xi} \phi}})(\xi)
=\ad (\alpha^\nabla_\xi(X_\xi)) \wt{j_0}
=[\alpha^\nabla_\xi(X_\xi), \wt{j_0}].
\end{equation}
If $\Xi_j$ is horizontal, we  write $\Xi_j=\pi_{1*_\xi }X_\xi$ with
$\alpha^\nabla_\xi(X_\xi)=0$, so $(\pi^{-1}\nabla)_{\pi_{1*_\xi }X_\xi} \Phi=0$.

If $\Xi_j$ is vertical, we  write $\Xi_j=\pi_{1*_\xi }A^*_\xi $ with $A\in \frakg$
such that $A\wt{j_0}+\wt{j_0} A=0$; then
$\Xi_j=\frac{d}{dt}(\xi\circ \exp tA\circ \wt{j_0} \circ (\xi\circ \exp
tA)^{-1}\vert_{t=0}=\xi \circ [A, \wt{j_0}] \circ \xi^{-1}$, hence
$\widehat{\Xi_j}(\xi)= [A, \wt{j_0}]$ when we view the vertical tangent
vector $\Xi_j$ as an element of $\End(T_pM)=\End(E)_j$. We also have
$\alpha^\nabla_\xi(A^*_\xi))=A$;  hence

\begin{proposition}
The projection on the vertical tangent space $\calV_j$
\begin{equation}
P^{\calV_j}: T_jJ(M,g)
=\calH^\nabla_j\oplus \calV_j\rightarrow \calV_j
=[j,\End(E,g)_j]=[\Phi,\End(E,g)]_j\subset \End(E,g)_j
\end{equation}
is given in terms of the covariant derivative under the pullback connection of
the canonical section $\Phi$ of $\End E$ (defined by \eqnref{eq:defPhi}) via
\begin{equation}
(\pi^{-1}\nabla)^{\End(E,g)}_{\quad \Xi_j } \Phi =P^{\calV_j} (\Xi_j).
\end{equation}
\end{proposition}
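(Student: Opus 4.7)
The plan is to exploit the direct sum decomposition $T_jJ(M,g)=\calH^\nabla_j\oplus\calV_j$ and verify the identity separately on each summand, which suffices by linearity of both sides in $\Xi_j$. Both sides take values in $\End(E,g)_j$, so the claim is that the covariant derivative of the tautological section $\Phi$ under $\pi^{-1}\nabla$ vanishes on horizontal vectors and reproduces the vertical vector itself, regarded as an endomorphism of $T_pM$, on vertical vectors.

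For a horizontal $\Xi_j\in\calH^\nabla_j$, I would lift it to a horizontal vector $X_\xi\in H^\nabla_\xi$ in the frame bundle, so that $\alpha^\nabla_\xi(X_\xi)=0$. Substituting into the general formula already derived in the section,
\[
(\widehat{(\pi^{-1}\nabla)^{\End(E,g)}_{\Xi_j}\Phi})(\xi)=[\alpha^\nabla_\xi(X_\xi),\widetilde{j_0}],
\]
yields zero, matching $P^{\calV_j}(\Xi_j)=0$. For a vertical $\Xi_j\in\calV_j$, I would lift it to a fundamental vector field value $A^*_\xi$ with $A\in\frakg$ satisfying $A\widetilde{j_0}+\widetilde{j_0}A=0$; then $\alpha^\nabla_\xi(A^*_\xi)=A$, and the same formula returns $[A,\widetilde{j_0}]$. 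This equals the $G$-equivariant representative $\widehat{\Xi_j}(\xi)$ of $\Xi_j$ viewed inside $[\Phi,\End(E,g)]_j$, as recorded in the paragraph preceding the proposition via the identity $\Xi_j=\xi\circ[A,\widetilde{j_0}]\circ\xi^{-1}$. Since $P^{\calV_j}$ is the identity on $\calV_j$, the two sides agree.

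The only subtle point, which is really the main obstacle worth attention, is the bookkeeping that identifies the vertical tangent vector $\pi_{1*\xi}A^*_\xi$ with the endomorphism $[A,\widetilde{j_0}]$ at the chosen frame $\xi$; this follows from the description of $A^*$ as the derivative of $\xi\cdot\exp tA$, combined with the defining formula $\pi_1(\xi)=\xi\circ\widetilde{j_0}\circ\xi^{-1}$ for the bundle projection, which conjugates the $\frakg$-action on $V$ into the action of infinitesimal isometries on $T_pM$. Once this identification is fixed, the proof reduces to reading off the two formulas from the preceding computation and observing that they coincide on horizontal and vertical summands respectively.
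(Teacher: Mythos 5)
Your proposal is correct and follows essentially the same route as the paper: the paper's own argument is precisely the evaluation of the formula $\widehat{(\pi^{-1}\nabla)^{\End(E,g)}_{\pi_{1*\xi}X_\xi}\Phi}(\xi)=[\alpha^\nabla_\xi(X_\xi),\wt{j_0}]$ on horizontal lifts (giving $0$) and on fundamental vector fields $A^*_\xi$ (giving $[A,\wt{j_0}]$, identified with $\Xi_j$ via $\Xi_j=\xi\circ[A,\wt{j_0}]\circ\xi^{-1}$). The identification you flag as the subtle point is exactly the one the paper records in the paragraph preceding the proposition.
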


Note that we differ here slightly from Proposition 3 in \cite{bib:OB-R}; we
follow their development, adapting to  this difference.

Recall that the projection on $\calH^\nabla$ identified with $E$ is given
by $\pi_*$.%\\[2mm]

%%%%%%%%%%
\subsection{A connection on $TJ(M,g)$ preserving $J_{\nabla}^{\pm}$}
\label{section:connection}

We define a covariant derivative of sections of  $E$ preserving $g$ so that the
associated covariant derivative of sections of $\End(E,g)$ preserves sections of
$\calV$; let
\begin{equation}
D^E_\Xi Y:= (\pi^{-1}\nabla)^E_\Xi Y +\half (P^{\calV}(\Xi)\circ \Phi)(Y),  
\qquad \Xi\in \Gamma(TJ(M,g)),\quad Y\in\Gamma(E)
\end{equation}
where  $\Phi$ and $P^{\calV}(\Xi)$ are viewed as sections of $\End(E,g)$.
This covariant derivative preserves the tautological complex structure $\Phi$ on
the bundle $E$ since it is equal to
\[
D^E=(\pi^{-1}\nabla)^E-\half \Phi\circ \left((\pi^{-1}\nabla)^{\End E}\Phi\right).
\]
The associated covariant derivative of sections of $\End E$ is given by
\begin{equation}
D^{\End E}_\Xi S:=(\pi^{-1}\nabla)^{\End E}_\Xi S 
+\half [P^{\calV}(\Xi)\circ \Phi,S], 
\qquad S \in \Gamma(\End E).
\end{equation}
Since  $\Phi$ anticommutes with any element of $\calV$, and $\Phi^2=-\id$
we have indeed
\begin{equation}
D^{\End E}_\Xi \Phi=(\pi^{-1}\nabla)^{\End E}_\Xi \Phi
+\half [P^{\calV}(\Xi)\circ \Phi,\Phi] 
= P^{\calV}(\Xi)-P^{\calV}(\Xi)=0.
\end{equation}
Hence $D^{\End E}$ preserves sections of $\calV=[\Phi,\End(E,g)]$ and
$D^E\oplus D^{\End E}$ induces a covariant derivative $D$ of sections of the
tangent bundle $TJ(M,g)$. If Y is a section of $\calH^\nabla\simeq E$,
then
\[
D_\Xi ({J_{\nabla}^{\pm}} Y)=D_\Xi(\pm\Phi(Y))
=\pm\Phi(D_\Xi(Y)
={J_{\nabla}^{\pm}} D_\Xi(Y);
\]
if $S$ is a section of $\calV\subset\End(E,g)$ then
\[
D_\Xi ({J_{\nabla}^{\pm}} S)=D_\Xi(\Phi\circ S)
=\Phi\circ D_\Xi(S)={J_{\nabla}^{\pm}} D_\Xi(S).
\]
Hence \begin{equation}
D{J_{\nabla}^{\pm}}=0.
\end{equation}
Since $D$ preserves $\calH^\nabla\simeq E$ and $\calV$, the
covariant derivative of the projections  vanish:
\begin{equation}
DP^{\calV}=0 \qquad D\pi_*=0.
\end{equation}

 %%%%%%%%%%%%%%%%%%%

\section{A closed $2$-form on $J(M,g)$ associated to $\nabla$}
\label{section:2form}

Observe that $D^E$ preserves the tautological  complex structure defined by
$\Phi$ on the bundle $E$, hence can be used, following Reznikov
\cite{bib:Reznikov} and Rawnsley \cite{bib:Rawnsley}, in the Chern--Weil
construction of characteristic classes of $E$;  the complex trace of the
curvature  of $D^E$,
\[
\chi(J(M,g)) \ni \Xi, \Xi^\prime  \mapsto  \Tr_\bbC \left(D^E_{\Xi}\circ D^E_{\Xi^\prime }-
D^E_{\Xi^\prime }\circ D^E_{\Xi}-D^E_{[\Xi,\Xi^\prime ]}\right)
\]
is $-2\pi \sqrt{-1}$ times a real closed $2$-form on $J(M,g)$ representing
$c_1(E,\Phi) \in H^2(J(M,g),\bbR)$ which is the real first Chern class of the complex
vector bundle $(E,\Phi)$.

\begin{proposition}\cite{bib:Reznikov}\label{prop:symplectic}
Having chosen a torsion-free connection $\nabla$  preserving the
pseudo-Rie\-mann\-ian  or symplectic structure $g$, the $2$-form
$\omega^{J(M,g,\nabla)}$ on $J(M,g)$ defined by
\begin{equation}\label{def:omegaJ(M,g,nabla)}
\omega^{J(M,g,\nabla)}_j(\Xi,\Xi^\prime ):=
-2\Tr_\bbR \left(R^\nabla_{\pi(j)} (\pi_{*j}\Xi,\pi_{*j}\Xi^\prime )\circ j\right) 
+i\Tr_\bbC\left(\left[ P^\calV(\Xi),P^\calV(\Xi^\prime )  \right]\right),
\end{equation}
which represents $-8\pi c_1(E,\Phi)$, is symplectic if and only if, for any
$p\in M$ and any  $j\in J(M,g)_p$, the skew-symmetric bilinear form
$\Omega^{\nabla,j}$ on $T_pM$
\begin{equation}\label{eq:}
X,Y \mapsto \Tr_\bbR (R^\nabla_p (X,Y)\circ j)
\end{equation}
is non-degenerate.
\end{proposition}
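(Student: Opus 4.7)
Closedness of $\omega^{J(M,g,\nabla)}$ is already known, as the form represents the real cohomology class $-8\pi c_1(E,\Phi)$, so only non-degeneracy is at stake. The plan is to exploit the splitting $T_jJ(M,g)=\calH^\nabla_j\oplus\calV_j$ from Section \ref{section:Jpm}: here $\pi_{*j}$ annihilates $\calV_j$ and restricts to an isomorphism $\calH^\nabla_j\to T_{\pi(j)}M$, while $P^{\calV_j}$ annihilates $\calH^\nabla_j$ and is the identity on $\calV_j$. Writing $\Xi=Y+S$ and $\Xi'=Y'+S'$ with $Y,Y'\in\calH^\nabla_j$ and $S,S'\in\calV_j$ in \eqnref{def:omegaJ(M,g,nabla)}, the first summand depends only on $Y,Y'$ and the second only on $S,S'$, so
\[
\omega^{J(M,g,\nabla)}_j(\Xi,\Xi') \;=\; \omega^h_j(Y,Y') + \omega^v_j(S,S'),
\]
with $\omega^h_j(Y,Y')=-2\Omega^{\nabla,j}(\pi_{*j}Y,\pi_{*j}Y')$, $\omega^v_j(S,S')=i\Tr_\bbC[S,S']$, and no mixed terms. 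Hence $\omega^{J(M,g,\nabla)}$ is non-degenerate at $j$ if and only if both $\omega^h_j$ and $\omega^v_j$ are non-degenerate.

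Since $\pi_{*j}\colon\calH^\nabla_j\to T_{\pi(j)}M$ is an isomorphism, non-degeneracy of $\omega^h_j$ is precisely non-degeneracy of $\Omega^{\nabla,j}$ on $T_{\pi(j)}M$, which is the condition appearing in the statement. The proof therefore reduces to showing that $\omega^v_j$ is always non-degenerate on $\calV_j$, independently of the connection $\nabla$ and of the curvature of $M$.

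To establish this, recall that any $S\in\calV_j$ anticommutes with $j$, so is a $j$-antilinear endomorphism of $(T_{\pi(j)}M,j)$; the composition $SS'$ of two such is complex linear, hence $\Tr_\bbC(SS')$ is well defined. A direct computation (for instance in the one-complex-dimensional model $Sv=a\bar v$, $S'v=b\bar v$, giving $\Tr_\bbC(SS')=a\bar b$) shows $\Tr_\bbC(S'S)=\overline{\Tr_\bbC(SS')}$, so $i\Tr_\bbC[S,S']=-2\,\Im\Tr_\bbC(SS')$ is a genuine real antisymmetric bilinear form on $\calV_j$. Picking a frame $\xi$ with $j=\xi\wt{j_0}\xi^{-1}$ and transporting the problem to the fibre, $\calV_j$ is identified with the space of $n\times n$ complex matrices $M$ satisfying $I_{p,q}M+M^{\mathrm{T}}I_{p,q}=0$ in the pseudo-Riemannian case and with the space of complex symmetric $n\times n$ matrices in the symplectic case. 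Writing $Sv=M\bar v$ one gets $\Tr_\bbC(SS')=\Tr(M\bar M')$, and $-2\,\Im\Tr(M\bar M')$ becomes the imaginary part of the (possibly indefinite) Hermitian pairing $(M,M')\mapsto\Tr(M\bar M')$ restricted to this matrix space; a short index check confirms it is non-degenerate.

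The main obstacle is this last step, since it requires a uniform (or case-by-case) matrix calculation and the correct identification of the resulting pairing as non-degenerate---this is what one expects geometrically, the twistor fibre being a (possibly pseudo-)Hermitian symmetric space carrying a canonical non-degenerate invariant $2$-form. Combining the horizontal and vertical analyses then yields the equivalence claimed in the proposition.
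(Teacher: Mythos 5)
Your handling of the non-degeneracy equivalence follows essentially the same route as the paper: the form is block-diagonal for the splitting $T_jJ(M,g)=\calH^\nabla_j\oplus\calV_j$ (no mixed terms, since $\pi_{*j}$ kills $\calV_j$ and $P^{\calV}$ kills $\calH^\nabla_j$), the horizontal block is $-2\Omega^{\nabla,j}$ transported through the isomorphism $\pi_{*j}$, and the vertical block is independent of $\nabla$ and always non-degenerate. Where the paper disposes of the vertical block by identifying $i\Tr_\bbC[V_1,V_2]$ with the canonical symplectic structure of the fibre $Gl(V,\wt{g}_0)/Gl(V,\wt{g}_0,\wt{j_0})$, you verify non-degeneracy by hand: $\calV_j$ consists of $j$-antilinear skew endomorphisms, $i\Tr_\bbC[S,S']=-2\Im\Tr_\bbC(SS')=-2\Im\Tr(M\bar M')$, and this is the imaginary part of a non-degenerate (possibly indefinite) Hermitian form on a complex subspace of matrices, hence non-degenerate. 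That computation is correct and is a legitimate, more self-contained substitute for the paper's appeal to the standard fibre form.

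The genuine gap is at the very first step. You discharge closedness by saying the form ``represents $-8\pi c_1(E,\Phi)$'', but that representation statement is itself part of the proposition, and proving it is precisely what most of the paper's proof does: one computes the curvature of $D^E=(\pi^{-1}\nabla)^E-\half\,\Phi\circ\left((\pi^{-1}\nabla)^{\End E}\Phi\right)$, finding $\half\pi^*(R^\nabla(\pi_*\Xi,\pi_*\Xi'))-\half\Phi\,\pi^*(R^\nabla(\pi_*\Xi,\pi_*\Xi'))\,\Phi-\frac14[P^\calV(\Xi),P^\calV(\Xi')]$, and checks that $-4i\Tr_\bbC$ of this endomorphism-valued $2$-form is exactly the right-hand side of \eqnref{def:omegaJ(M,g,nabla)}; only then do closedness and the class $-8\pi c_1(E,\Phi)$ follow from Chern--Weil applied to the $\Phi$-preserving connection $D^E$. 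Without that identification, invoking the cohomology class to get closedness of the explicitly defined form is circular. Everything downstream of that point in your argument is sound.
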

\begin{proof}
 Indeed, since $D^E=(\pi^{-1}\nabla)^E
 -\half \Phi\circ \left((\pi^{-1}\nabla)^{\End E}\Phi\right)$, 
 we have
\begin{eqnarray*}
\left(D^E_{\Xi}\circ D^E_{\Xi^\prime }-D^E_{\Xi^\prime }\circ D^E_{\Xi}
-D^E_{[\Xi,\Xi^\prime ]}\right)
&=&\pi^*(R^\nabla(\pi_*\Xi,\pi_*\Xi^\prime ))
-\half \Phi\left[\pi^*(R^\nabla(\pi_*\Xi,\pi_*\Xi^\prime )),
\phi\right] \\
&&\quad\quad-\frac{1}{4}  
\left[(\pi^{-1}\nabla)^{\End E}_\Xi\Phi,(\pi^{-1}\nabla)
^{\End E}_{\Xi^\prime }\Phi \right]\\
&=&\half\pi^*(R^\nabla(\pi_*\Xi,\pi_*\Xi^\prime ))
-\half \Phi\, \pi^*(R^\nabla(\pi_*\Xi,\pi_*\Xi^\prime ))\, 
\Phi\\
&&\quad\quad-\frac{1}{4}  \left[P^\calV(\Xi),P^\calV(\Xi^\prime ) \right]
\end{eqnarray*}
where
$(\pi^*(R^\nabla(\pi_*\Xi,\pi_*\Xi^\prime )))(j):=R^\nabla_{p=\pi(j)}(\pi_{*j}\Xi_j,
\pi_{*j}\Xi^\prime _j)$ is viewed as an endomorphism of $T_pM$, hence as an element of
$\End(E,g)_j$.

Observe that $V_1,V_2\in T_j(\calV_j)\rightarrow
i\Tr_\bbC\left(\left[V_1,V_2 \right]\right)_j=-\Tr_\bbR (j\left[V_1,V_2 \right])$
defines the usual symplectic structure on the fibre of $J(M,g)$, i.e. the one
induced by the isomorphism between a fibre and $Gl(V,\wt{g}_0)/
Gl(V,\wt{g}_0,\wt{j_0})$.

Hence the closed $2$-form $\omega^{J(M,g)}(\Xi,\Xi^\prime ) =-4i\Tr_\bbC
\left(D^E_{\Xi}\circ D^E_{\Xi^\prime } -D^E_{\Xi^\prime }\circ
D^E_{\Xi}-D^E_{[\Xi,\Xi^\prime ]}\right)$ is symplectic  if and only if, for any $p\in
M$ and any  $j\in J(M,g)_p$, the bilinear form on $T_pM$ $X,Y \mapsto \Tr_\bbR
(R^\nabla_p (X,Y)\circ j)$ is non-degenerate.
\end{proof}

\begin{lemmadef}
Each of the almost complex structures $J^\pm_\nabla$ is said to be 
compatible with the closed $2$-form $\omega^{J(M,g,\nabla)}$ when
\begin{equation}\label{eq:compat}
\omega^{J(M,g,\nabla)}(J^\pm_\nabla\Xi,J^\pm_\nabla \Xi^\prime)
=\omega^{J(M,g,\nabla)}(\Xi,\Xi^\prime)
\end{equation} 
i.e.\  when $\omega^{J(M,g,\nabla)}$ is of type $(1,1)$ with respect to $J^\pm_\nabla$.
This will be true  if and only if
\begin{eqnarray}
\kern-1cm\Tr_\bbR \left(R^\nabla_{p} (jX, jY)\circ j\right)&=&\Tr_\bbR \left(R^\nabla_{p} 
(X, Y)\circ j\right),\nonumber\\ 
&& \qquad \forall p\in M,\, X,Y \in T_pM, j\in J(M,g)_p.\label{eq:Appendix}
\end{eqnarray} 
\end{lemmadef}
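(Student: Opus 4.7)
The plan is to exploit the fact that $J^\pm_\nabla$ preserves the splitting $T_jJ(M,g) = \calH^\nabla_j \oplus \calV_j$, so the horizontal and vertical contributions to $\omega^{J(M,g,\nabla)}$ can be treated separately. First I record two elementary identities that are immediate from the definitions in Section~\ref{section:Jpm}: since $\pi_*$ annihilates $\calV_j$ and $P^{\calV}$ annihilates $\calH^\nabla_j$, and since $J^\pm_\nabla$ preserves the splitting, one has
\[
\pi_{*j}(J^\pm_\nabla \Xi) = \pm j \circ \pi_{*j}(\Xi), \qquad P^{\calV}(J^\pm_\nabla \Xi) = j \circ P^{\calV}(\Xi).
\]

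Next I dispose of the vertical piece of \eqref{def:omegaJ(M,g,nabla)}. For any $S_1,S_2\in\calV_j$ one has $S_kj+jS_k=0$, and $j^2=-\id$, so a short direct calculation gives $(jS_1)(jS_2)=S_1S_2$ and hence $[jS_1,jS_2]=[S_1,S_2]$. Consequently
\[
i\Tr_\bbC\bigl([P^{\calV}(J^\pm_\nabla \Xi), P^{\calV}(J^\pm_\nabla \Xi')]\bigr) = i\Tr_\bbC\bigl([P^{\calV}(\Xi), P^{\calV}(\Xi')]\bigr)
\]
automatically, so the compatibility question reduces entirely to the curvature term.

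For that term the two $\pm$ signs cancel, yielding
\[
\Tr_\bbR\bigl(R^\nabla_{\pi(j)}(\pi_{*j} J^\pm_\nabla \Xi, \pi_{*j} J^\pm_\nabla \Xi') \circ j\bigr) = \Tr_\bbR\bigl(R^\nabla_{\pi(j)}(j\pi_{*j}\Xi, j\pi_{*j}\Xi') \circ j\bigr).
\]
Hence \eqref{eq:compat} is equivalent, for all $\Xi,\Xi'\in T_jJ(M,g)$, to
\[
\Tr_\bbR\bigl(R^\nabla_p(jX,jY)\circ j\bigr) = \Tr_\bbR\bigl(R^\nabla_p(X,Y)\circ j\bigr)
\]
with $X=\pi_{*j}\Xi$, $Y=\pi_{*j}\Xi'$. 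Since $\pi_{*j}|_{\calH^\nabla_j}$ is an isomorphism onto $T_pM$ and $j\in J(M,g)_p$, $p\in M$ are arbitrary, this is precisely the stated condition. The only step that is not pure bookkeeping is the algebraic identity $[jS_1,jS_2]=[S_1,S_2]$ on $\calV_j$; once that is in hand, the rest is substitution.
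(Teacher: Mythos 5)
Your proof is correct and is the evident argument the paper intends (the paper states this Lemma/Definition without proof, but the identities $\pi_{*}(J^\pm_\nabla\Xi)=\pm\Phi(\pi_*\Xi)$ and $P^{\calV}(J^\pm_\nabla\Xi)=\Phi\circ P^{\calV}(\Xi)$ you invoke are exactly those the authors use later in the Nijenhuis computation). The key observations — that the $2$-form has no horizontal–vertical cross terms, that $[jS_1,jS_2]=[S_1,S_2]$ makes the vertical term automatically invariant, and that the $\pm$ signs cancel in the bilinear curvature term — are all verified correctly, reducing compatibility to the stated trace identity.
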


%%%%%%%%%%%%%%%%%%%%%%%%

\section{The Nijenhuis tensor for $J_{\nabla}^{\pm}$}
\label{section:Nijenhuis}

The torsion $T^D$ of $D$ can be used to compute the Nijenhuis tensor of $J^\pm_\nabla$.
Now the vertical part of the torsion $T^D$ is given by
\begin{eqnarray*}
P^{\calV} T^D(\Xi,\Xi^\prime )
&=&P^{\calV}\left(D_\Xi\Xi^\prime -D_{\Xi^\prime }\Xi-[\Xi,\Xi^\prime ]\right)\\
&=&D_\Xi(P^{\calV}(\Xi^\prime ))-D_{\Xi^\prime }(P^{\calV}(\Xi))
-P^{\calV}([\Xi,\Xi^\prime ])\\
&=&D_\Xi((\pi^{-1}\nabla)^{\End E}_{\Xi^\prime } \Phi)-D_{\Xi^\prime }
((\pi^{-1}\nabla)^{\End E}_{\Xi} \Phi)-(\pi^{-1}\nabla)^{\End E}_{[\Xi,\Xi^\prime ]} \Phi\\
&=&(\pi^{-1}\nabla)^{\End E}_{\Xi} ((\pi^{-1}\nabla)^{\End E}_{\Xi^\prime } \Phi)+\half 
[P^{\calV}(\Xi)\circ \Phi,P^{\calV}(\Xi^\prime )]\\
&&- (\pi^{-1}\nabla)^{\End E}_{\Xi^\prime } ((\pi^{-1}\nabla)^{\End E}_{\Xi} \Phi)-\half 
[P^{\calV}(\Xi^\prime )\circ \Phi,P^{\calV}(\Xi)]\\
&&-(\pi^{-1}\nabla)^{\End E}_{[\Xi,\Xi^\prime ]} \Phi
=[\pi^*(R^\nabla(\pi_*\Xi,\pi_*\Xi^\prime )),\Phi]
+\frac{1}{4}\left[[P^{\calV}(\Xi),P^{\calV}(\Xi^\prime )],\Phi\right]\\
&=&[\pi^*(R^\nabla(\pi_*\Xi,\pi_*\Xi^\prime )),\Phi]
\end{eqnarray*}
where
$(\pi^*(R^\nabla(\pi_*\Xi,\pi_*\Xi^\prime )))(j):=R^\nabla_{p=\pi(j)}(\pi_{*j}\Xi_j,
\pi_{*j}\Xi^\prime _j)$ is viewed as an endomorphism of $T_pM$ hence as an element of
$\End(E,g)_j$. The horizontal part of the torsion is given by
\begin{eqnarray*}
\pi_*T^D(\Xi,\Xi^\prime )
&=& \pi_*\left(D_\Xi\Xi^\prime -D_{\Xi^\prime }\Xi-[\Xi,\Xi^\prime ]\right)
=D_\Xi(\pi_*\Xi^\prime )-D_{\Xi^\prime }(\pi_*\Xi)-\pi_*([\Xi,\Xi^\prime ])\\
&=& (\pi^{-1}\nabla)^{ E}_{\Xi} (\pi_*\Xi^\prime )
-(\pi^{-1}\nabla)^{ E}_{\Xi^\prime } (\pi_*\Xi)-\pi_*([\Xi,\Xi^\prime ])\\
&&+\half(P^{\calV}(\Xi)\circ \Phi)(\pi_*\Xi^\prime )
-\half(P^{\calV}(\Xi^\prime )\circ \Phi)(\pi_*\Xi)\\
&=&\pi^* \left(T^\nabla(\pi_*\Xi,\pi_*\Xi^\prime )\right)
-\half \Phi\left(P^{\calV}(\Xi)(\pi_*\Xi^\prime )
-P^{\calV}(\Xi^\prime )(\pi_*\Xi)\right)
\end{eqnarray*}
where $\left(\pi^*
\left(T^\nabla(\pi_*\Xi,\pi_*\Xi^\prime )\right)\right)(j):=T^\nabla_{p=\pi(j)}(\pi_{*j
}\Xi_j,\pi_{*j}\Xi^\prime _j)$ is an element of $T_pM$ viewed as an element of $E_j$.

Since $D{J_{\nabla}^{\pm}}=0$ we know that 
\begin{equation}\label{eq:torsionN}
T^{\nabla^\prime }(JX,JY)-JT^{\nabla^\prime }(JX,Y)-JT^{\nabla^\prime }(X,JY)
-T^{\nabla^\prime }(X,Y)=-N^J(X,Y)
\end{equation}
hence
\[
N^{{J_{\nabla}^{\pm}}}(\Xi,\Xi^\prime )
=-T^D({J_{\nabla}^{\pm}}\Xi,{J_{\nabla}^{\pm}}\Xi^\prime )
+{J_{\nabla}^{\pm}}T^D({J_{\nabla}^{\pm}}\Xi,\Xi^\prime )
+{J_{\nabla}^{\pm}}T^D(\Xi,{J_{\nabla}^{\pm}}\Xi^\prime )+T^D(\Xi,\Xi^\prime ).
\]
From the formulas above, since $ \pi_*({J_{\nabla}^{\pm}}
\Xi)=\pm\Phi(\pi_*\Xi)$ and $P^{\calV}({J_{\nabla}^{\pm}} \Xi))=\Phi\circ
P^{\calV}(\Xi)$, we get

\begin{eqnarray}
P^{\calV}(N^{{J_{\nabla}^{\pm}}}(\Xi,\Xi^\prime ))
&=&-[\pi^*(R^\nabla(\Phi(\pi_*\Xi),\Phi(\pi_*\Xi^\prime ))),\Phi]\pm \Phi\circ 
[\pi^*(R^\nabla(\Phi(\pi_*\Xi),\pi_*\Xi^\prime )),\Phi]\nonumber\\
&& \pm \Phi\circ [\pi^*(R^\nabla(\pi_*\Xi,\Phi(\pi_*\Xi^\prime ))),\Phi] 
+[\pi^*(R^\nabla(\pi_*\Xi,\pi_*\Xi^\prime )),\Phi]\\
\pi_*N^{{J_{\nabla}^{\pm}}}(\Xi,\Xi^\prime )
&=& -\pi^* \left(T^\nabla(\Phi(\pi_*\Xi),\Phi(\pi_*\Xi^\prime ))\right)
+\Phi\left(\pi^* \left(T^\nabla(\Phi(\pi_*\Xi),\pi_*\Xi^\prime )\right)\right)\nonumber\\
&& +\Phi\left(\pi^* \left(T^\nabla(\pi_*\Xi,\Phi(\pi_*\Xi^\prime ))\right)\right)
+\pi^* \left(T^\nabla(\pi_*\Xi,\pi_*\Xi^\prime )\right)\nonumber\\
&&\pm\half \Phi\left((\Phi\circ P^{\calV}(\Xi))(\Phi\pi_*\Xi^\prime )
-(\Phi\circ P^{\calV}(\Xi^\prime ))(\Phi\pi_*\Xi)\right)\nonumber\\
&&\mp\half \Phi^2\left(\Phi\circ P^{\calV}(\Xi)(\pi_*\Xi^\prime )
-\Phi\circ P^{\calV}(\Xi^\prime )(\pi_*\Xi)\right)\nonumber\\
&&-\half \Phi^2\left(P^{\calV}(\Xi^\prime )(\Phi(\pi_*\Xi))
-P^{\calV}(\Xi)(\Phi(\pi_*\Xi^\prime ))\right)  \nonumber \\
&&-\half \Phi\left(P^{\calV}(\Xi)(\pi_*\Xi^\prime )
-P^{\calV}(\Xi^\prime )(\pi_*\Xi)\right)\nonumber\\
&=& -\pi^* \left(T^\nabla(\Phi(\pi_*\Xi),\Phi(\pi_*\Xi^\prime ))\right)
+\Phi\left(\pi^* \left(T^\nabla(\Phi(\pi_*\Xi),\pi_*\Xi^\prime )\right)\right)\nonumber\\
&& +\Phi\left(\pi^* \left(T^\nabla(\pi_*\Xi,\Phi(\pi_*\Xi^\prime ))\right)\right)
+\pi^* \left(T^\nabla(\pi_*\Xi,\pi_*\Xi^\prime )\right)\nonumber\\
&&(\mp1+1) \left((P^{\calV}(\Xi))(\Phi\pi_*\Xi^\prime )
-(P^{\calV}(\Xi^\prime ))(\Phi\pi_*\Xi)\right)
\end{eqnarray}

\begin{proposition}
The Nijenhuis tensor associated to the canonical almost complex structures
${J_{\nabla}^{\pm}}$ on the twistor space $J(M,g)$ always vanishes on two
vertical vector fields; $N^{J^{ +}_{\nabla}}$ vanishes on $
{\calV}\times{\calH}^\nabla$ whereas $J_{\nabla}^{ -}$ is never
integrable because
\begin{equation}
N^{J_{\nabla}^{ -}}_j(S,Y)= 2 S_j(jY_j)=-2jS_j(Y_j), 
\quad \begin{array}{l} \textrm{ for } 
S\in \Gamma(\calV)\subset \Gamma(\End(E,g))\\
\, \textrm{ and } Y\in \Gamma(\calH^\nabla)=\Gamma(E) \end{array}
\end{equation}
so that $\Image N^{J_{\nabla}^{ -}}\supset \calH^\nabla$.

Choosing the connection $\nabla$ without torsion (which will be the Levi Civita
connection in the pseudo-Riemannian setting) one sees that the horizontal part
of $N^{{J_{\nabla}^{\pm}}}$ vanishes on $
{\calH}^\nabla\times{\calH}^\nabla$, hence $\Image N^{J_\nabla^+}
\subset \calV$.

The vertical part of the image of $N^{{J_{\nabla}^{\pm}}}_j$ consists of all the
endomorphisms of $T_pM$ with $p=\pi(j)$ given by
\begin{eqnarray*}
-[R^\nabla_p(jXjX^\prime ),j]\pm j\circ [R^\nabla_p(j X,X^\prime ),j] 
\pm j \circ [R^\nabla_p(X, jX^\prime ),j]
 +[R^\nabla_p(X,X^\prime ),j]\nonumber \\
&&\kern-5in =j\circ R^\nabla_p(jXjX^\prime )-R^\nabla_p(jXjX^\prime )\circ j \pm j\circ 
R^\nabla_p(j X,X^\prime )\circ j \pm R^\nabla_p(j X,X^\prime )\nonumber \\
&&\kern-4.6in \pm j\circ R^\nabla_p(X,jX^\prime )\circ j \pm R^\nabla_p(X,jX^\prime ) 
+R^\nabla_p(X,X^\prime )\circ j -j\circ R^\nabla_p(X,X^\prime ) \nonumber \\
&&\kern-5in =\textrm{Imaginary part of } \left(\Id-ij)\circ 
R^\nabla_p\left((\id\pm ij)X,(\id \pm ij) X^\prime \right)\circ (\id+ij)\right)
\end{eqnarray*}
which is equal to $\textrm{Real part of }-j\left(\Id-ij)\circ 
R^\nabla_p\left((\id\pm ij)X,(\id \pm ij) X^\prime \right)\circ (\id+ij)\right)$.
\end{proposition}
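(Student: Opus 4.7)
The plan is to substitute $\nabla' = D$ into the identity
\begin{equation*}
N^{J^\pm_\nabla}(\Xi,\Xi') = -T^D(J^\pm_\nabla\Xi, J^\pm_\nabla\Xi') + J^\pm_\nabla T^D(J^\pm_\nabla\Xi,\Xi') + J^\pm_\nabla T^D(\Xi, J^\pm_\nabla\Xi') + T^D(\Xi,\Xi'),
\end{equation*}
which is valid because $DJ^\pm_\nabla=0$ (this is \eqref{eq:torsionN}). Substituting the formulas for $P^\calV T^D$ and $\pi_* T^D$ derived just above the proposition, and using $\pi_*(J^\pm_\nabla\Xi) = \pm\Phi(\pi_*\Xi)$ together with $P^\calV(J^\pm_\nabla\Xi) = \Phi\circ P^\calV(\Xi)$, immediately yields the two displayed formulas for $P^\calV N^{J^\pm_\nabla}$ and $\pi_* N^{J^\pm_\nabla}$. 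All remaining assertions then follow from case-by-case inspection of those formulas.

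For $(\Xi,\Xi') \in \calV\times\calV$, every $R^\nabla$- and $T^\nabla$-argument is zero since $\pi_*\Xi = \pi_*\Xi' = 0$, and the surviving $(\mp 1 + 1)$-term in $\pi_* N^{J^\pm_\nabla}$ vanishes for the same reason; hence both components of $N^{J^\pm_\nabla}(\Xi,\Xi')$ are zero. For $(S,Y)\in\calV\times\calH^\nabla$, using $\pi_*S = 0$ and $P^\calV(Y)=0$, all $R^\nabla$- and $T^\nabla$-arguments again collapse, and only the $(\mp 1+1)$-coefficient contributes to the horizontal component: this is $0$ for $J^+_\nabla$ and $2$ for $J^-_\nabla$, yielding $\pi_* N^{J^-_\nabla}_j(S,Y) = 2P^\calV(S)(\Phi(\pi_*Y)) = 2S_j(jY_j)$; the equality $S(jY) = -jS(Y)$ then follows from $Sj + jS = 0$. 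To conclude $\Image N^{J^-_\nabla} \supset \calH^\nabla$, I would observe that the evaluation map $\calV_j\times T_{\pi(j)}M\to T_{\pi(j)}M$, $(S,Z)\mapsto S(Z)$, is surjective in dimensions $2n\ge 4$; this is the only non-formal ingredient and amounts to an elementary linear-algebra statement about infinitesimal isometries anticommuting with $j$.

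For $(Y,Y')\in\calH^\nabla\times\calH^\nabla$ with $\nabla$ torsion-free, both the $T^\nabla$-terms and the $P^\calV(Y), P^\calV(Y')$ factors vanish, so the horizontal component of $N^{J^\pm_\nabla}(Y,Y')$ is zero; combined with the previous case this gives $\Image N^{J^+_\nabla}\subset\calV$. The vertical component reduces, on setting $\Phi_j = j$ and identifying $\pi_*Y = X$, $\pi_*Y' = X'$, to the four-term commutator expression in the statement. Expanding each $[R^\nabla(\cdot,\cdot),j]$ and each $j\circ[R^\nabla(\cdot,\cdot),j]$ using $j^2=-\id$ produces the eight signed terms displayed; regrouping them and comparing with the expansion of $(\id - ij)\circ R^\nabla_p((\id\pm ij)X, (\id\pm ij)X')\circ(\id + ij)$ identifies the sum as the imaginary part of the latter expression. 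The equivalent presentation as the real part of $-j$ times the same operator then follows from the general identity relating real and imaginary parts under multiplication by $i$, together with the fact that $j$ acts as $i$ on the $(1,0)$-eigenspace of the complexification. The only possible obstacle in the whole proof is the sign bookkeeping in this final expansion, which is mechanical but must be carried out carefully.
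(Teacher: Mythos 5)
Your proposal is correct and follows essentially the same route as the paper, whose ``proof'' is precisely the computation of the horizontal and vertical components of $T^D$ and their substitution into the identity $N^{J}=-T^D(J\cdot,J\cdot)+JT^D(J\cdot,\cdot)+JT^D(\cdot,J\cdot)+T^D(\cdot,\cdot)$, followed by the same case inspection (including the coefficient $(\mp1+1)$ giving $0$ for $J^+_\nabla$ and $2S(jY)$ for $J^-_\nabla$). Your added remark justifying $\Image N^{J^-_\nabla}\supset\calH^\nabla$ via surjectivity of the evaluation $(S,Z)\mapsto S(Z)$ for $2n\ge4$ is a small but genuine point the paper leaves implicit, and it is correct.
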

We now proceed as in \cite{bib:OB-R}: the vertical part of the image of
$N^{{J_{\nabla}^{\pm}}}_j$ vanishes identically  on all $j' s \in \pi^{-1}p$ if
and only if the curvature $\tilde R$, which is the expression (using a frame) of
$R^\nabla_p$ as a $1,3$ tensor on $V$, satisfies
\[
(\Id-i{\wt{j}})\circ \tilde{R}\left((\id\pm i{\wt{j}})\, 
\cdot \, ,(\id \pm i{\wt{j}}) \,\cdot \, \right)
\circ (\id+i{\wt{j}})=0, \quad \forall {\wt{j}}=A{\wt{j_0}}A^{-1}
\]
where $A \in G$. Hence for all $A \in G$ and putting $\wt{j}=A{\wt{j_0}}A^{-1}$
\begin{eqnarray*}
&&A(\Id-i{\wt{j_0}})A^{-1}\circ \tilde{R}\left(A(\id\pm i{\wt{j_0}})A^{-1}\, 
\cdot \, ,A(\id \pm i{\wt{j_0}})A^{-1} \,\cdot \, \right)
\circ A(\id+i{\wt{j_0}})A^{-1}=0,\,\mathrm{so}\\
&&(\Id-i{\wt{j_0}})\circ A^{-1} \tilde{R}\left(A(\id\pm i{\wt{j_0}})\, 
\cdot \, ,A(\id \pm i{\wt{j_0}}) \,\cdot \, \right)
 A \circ(\id+i{\wt{j_0}})=0, \quad \mathrm{so}\\
&&(\Id-i{\wt{j_0}})\circ A^{-1}\cdot\tilde{R}\left((\id\pm i{\wt{j_0}})\, 
\cdot \, ,(\id \pm i{\wt{j_0}}) \,\cdot \, \right)
\circ (\id+i{\wt{j_0}})=0 , 
\end{eqnarray*}
where  $A^{-1}\cdot\tilde{R}:= A^{-1}\tilde{R}(A \cdot,A\cdot)A$ denotes the
natural action of $G$ on tensors, hence if and only if the curvature $\tilde R$
takes values in the largest $G$-invariant subspace of  tensors on $V$ of
(pseudo-Riemannian, symplectic or plain) curvature type for which
\begin{equation}\label{eq:j0condition}
(\Id-i{\wt{j_0}})\circ \tilde{R}\left((\id\pm i{\wt{j_0}})\, 
\cdot \, ,(\id \pm i{\wt{j_0}}) \,\cdot \, \right)
\circ (\id+i{\wt{j_0}})=0.
\end{equation}
There is a natural action of ${\wt{j_0}}$ on curvature type tensors given by
\[
({\wt{j_0}}\cdot \tilde{R})(U,V)={\wt{j_0}}\circ \tilde{R}(U,V) 
-\tilde{R}({\wt{j_0}}U,V)-\tilde{R}(U,{\wt{j_0}}V)
-\tilde{R}(U,V)\circ {\wt{j_0}}.
\]
The action of ${\wt{j_0}}$ on $V^\bbC$ has $\pm i$ as eigenvalues,  the
projection on the $+i$-eigenspace being given by $\Id-i{\wt{j_0}}$. Hence
the action on the space of tensors of curvature type has eigenvalues in $\{0,\pm
2i,\pm 4i\}$; the projection on the $4i$-eigenspace is given by
\[
(\Id-i{\wt{j_0}})\circ \tilde{R}\left((\id+ i{\wt{j_0}})\, 
\cdot \, ,(\id + i{\wt{j_0}}) \,\cdot \, \right)\circ (\id+i{\wt{j_0}}),
\]
thus \eqnref{eq:j0condition} says that the vertical part of the image of
$N^{{J_{\nabla}^+}}$ vanishes if and only if $\tilde R$  takes values in the
largest $G$-invariant subspace of  curvature-type tensors on $V$ for which $4i $
is not an eigenvalue of the action of $\wt{j_0}$.

Next we examine the decomposition of the space of curvature type tensors under the
action of $G$.

%%%%%%%%%%%%%%%%%%%%%%%%%%%

\section{Pseudo-Riemannian structure of signature $(2p,2q)$\\
with (or with\-out) a given orientation}

In the case of a pseudo-Riemannian structure $g$ of signature $(2p,2q)$ on a
manifold $M$, one uses the Levi Civita connection for $\nabla$. 

\begin{definition}
The space of curvature type tensors at the point $p\in M$,
\[
\bigg\{ R\in \Lambda^2(V^*)\otimes\End(V) \bigg\vert \, \cyclic_{X,Y,Z}
R(X,Y)Z=0,\ g_p(R(X,Y)Z,T)=-g_p(R(X,Y)T,Z)\bigg\},
\]
with $V:=T_pM$, will be denoted by $\CC(V,g_p)$  where
$\cyclic_{X,Y,Z}\wt{R}(X,Y)Z$ here and elsewhere denotes the sum over cyclic
permutations of $X,Y,Z$.
\end{definition}
When $G=O(2p,2q)$ with $2p+2q=2n$, this space of  curvature type tensors splits
into $3$-irreducible parts \cite{bib:Besse} so that: 
$$R^\nabla=S^\nabla+E^\nabla+C^\nabla,$$ where $S^\nabla$ is  constructed
algebraically using the metric tensor $g$ and the scalar curvature
$scal(g)=\Tr\rho^\nabla$  with $g(X,\rho^\nabla
Z):=Ric^\nabla(X,Z):=\Tr[Y\rightarrow R^\nabla(X,Y)Z] $ 
\[
g(S^\nabla(X,Y)Z,T)=\frac{ scal(g)}{2n(2n-1)}
\left(g(X,Z)g(Y,T)-g(X,T)g(Y,Z)\right),
\]
where $E^\nabla$ is the half traceless part constructed algebraically using the
metric tensor and the traceless part of the Ricci tensor
(${\widehat{Ric}}(X,Z)=Ric^\nabla(X,Z)-\frac{ scal(g)}{2n} g(X,Z)$):
\begin{eqnarray*} 
g(E^\nabla(X,Y)Z,T)%&&\\&&\kern-1in \mbox{}
&=&{\frac {1}{2n-2}}\,\left(g(X,Z){\widehat{Ric}}(Y,T)
-g(X,T){\widehat{Ric}}(Y,Z)\right.\\
&& %\kern-0.7in\left.\mbox{}
~\qquad \qquad + \left. g(Y,T){\widehat{Ric}}(X,Z)
-g(Y,Z){\widehat{Ric}}(X,T)\right)  
\end{eqnarray*}
and where $C^\nabla$ is the totally traceless part, the so-called Weyl tensor.

Since
${\wt{g_0}}({\wt{j_0}}X,Y)+{\wt{g_0}}(X,{\wt{j_0}}Y) =0$, the 4i eigenvalue can
only arise in the Weyl tensor part and does so, hence the well known
\begin{proposition}\label{prop:pseudoRnonor}
 $J^+_\nabla$ is integrable in the pseudo-Riemannian context with no given
orientation if and only if $C^\nabla=0$.
\end{proposition}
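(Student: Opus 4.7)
\medskip
\noindent\textbf{Proof sketch.} The plan is to exploit the general framework already established. By the preceding proposition, integrability of $J^+_\nabla$ is equivalent to the vanishing of $N^{J^+_\nabla}$. Since $\nabla$ is Levi Civita, it is torsion-free, so the horizontal component of $N^{J^+_\nabla}$ vanishes identically, and the problem reduces to the vanishing of the vertical part. By the discussion culminating in equation~\eqnref{eq:j0condition}, this vertical part vanishes at every fibre point above $p$ if and only if $R^\nabla_p$ (expressed in a frame as the tensor $\tilde R\in\CC(V,\wt{g_0})$) lies in the largest $G$-invariant subspace of $\CC(V,\wt{g_0})$ on which $4i$ is not an eigenvalue of the action $\tilde R\mapsto \wt{j_0}\cdot\tilde R$.

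The second step is to inspect each irreducible summand of the $O(2p,2q)$-decomposition $\CC(V,\wt{g_0})=S\oplus E\oplus C$. The crucial observation (already noted in the paper just before the statement) is that $\wt{g_0}(\wt{j_0}X,Y)+\wt{g_0}(X,\wt{j_0}Y)=0$, which precisely says that the metric tensor $\wt{g_0}$ is fixed by the derivation action of $\wt{j_0}$; equivalently, $\wt{g_0}$ is of pure type $(1,1)$ with respect to $\wt{j_0}$. Since $S$ is built algebraically from $\wt{g_0}$ alone, a direct substitution into the formula
\[
(\wt{j_0}\cdot\tilde R)(X,Y)=\wt{j_0}\circ\tilde R(X,Y)-\tilde R(\wt{j_0}X,Y)-\tilde R(X,\wt{j_0}Y)-\tilde R(X,Y)\circ\wt{j_0}
\]
gives $\wt{j_0}\cdot\tilde R_S=0$, so only the eigenvalue $0$ occurs on $S$. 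For $E$, the tensor is linear in $\wt{g_0}$ and in the symmetric trace-free Ricci endomorphism $h$; splitting $h$ into its parts commuting and anticommuting with $\wt{j_0}$ shows that the eigenvalues of $\wt{j_0}$ on $h$, and hence on $E$, lie in $\{0,\pm2i\}$. Thus neither $S$ nor $E$ contains a $4i$-eigenvector.

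The third step is to exhibit a curvature-type tensor with nonzero $4i$-eigenvalue component in $C$. Passing to $V^{\bbC}=V^{1,0}\oplus V^{0,1}$, the $4i$-eigenspace of $\wt{j_0}$ on $\Wedge^2V^*\otimes\Wedge^2V^*$ consists of tensors of pure type $(2,0)\otimes(2,0)$ (plus conjugate); one checks that such a tensor can be chosen to satisfy the first Bianchi identity and to be traceless in all the contractions defining $S$ and $E$, hence it sits in $C$. Together with irreducibility of $C$ under $O(2p,2q)$, this forces the $4i$-eigenspace inside $C$ to generate the full space $C$ as $G$-module, so the largest $G$-invariant subspace of $\CC(V,\wt{g_0})$ avoiding the eigenvalue $4i$ is exactly $S\oplus E$. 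Consequently, the vertical part of $N^{J^+_\nabla}$ vanishes at all $j\in J(M,g)_p$ if and only if $C^\nabla_p=0$, which combined with the torsion-free input gives the claim.

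The only genuinely subtle step is the last one: verifying that $C$ really does meet the $4i$-eigenspace nontrivially. The earlier steps are direct derivation-type computations with $\wt{j_0}$ and the trace-invariance of $\wt{g_0}$, but showing that $C$ is not contained in $\ker(\wt{j_0}\cdot - 4i\operatorname{Id})^\perp$ requires a concrete example or an appeal to the known $U(p,q)$-refinement of the $O(2p,2q)$-decomposition of $\CC(V,\wt{g_0})$.
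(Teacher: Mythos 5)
Your proposal is correct and follows essentially the same route as the paper: reduction via the torsion-free connection and the eigenvalue analysis of the $\wt{j_0}$-action culminating in \eqnref{eq:j0condition}, then the observation that the $(1,1)$-type of $\wt{g_0}$ confines the $4i$-eigenvalue to the Weyl summand, together with irreducibility of the Weyl tensors under $O(2p,2q)$. The paper compresses all of this into one sentence; you have merely supplied the details it leaves implicit, including the (correctly flagged) verification that the Weyl space genuinely meets the $4i$-eigenspace.
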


In the oriented case the decomposition of the curvature under the action of
$SO(2p,2q)$ is the same as above in dimension greater than $4$ but in dimension
$4$, there is a further splitting  of the Weyl tensor into a self-dual and an
anti-self-dual part. A Weyl tensor is said to be self-dual (respectively
anti-self-dual), if, viewed as a endomorphism of $\Lambda^2T^*M$, it vanishes on
the eigenspace of eigenvalue $-1$ (respectively $+1$) of the Hodge $*$ operator
acting on 2-forms.

\begin{proposition}\label{prop:pseudoRor}
$J^+_\nabla$ is integrable in the pseudo-Riemannian context with a  given
orientation if and only if $C^\nabla=0$ when $2n\ge 4$;in dimension $4$,it is
integrable if and only if the the Weyl component of the Riemann curvature tensor
is self-dual when the signature is $(4,0)$ or $(0,4)$ and anti-self-dual when
the signature is $(2,2)$. 
\end{proposition}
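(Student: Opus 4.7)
The plan is to apply the criterion from the end of Section~\ref{section:Nijenhuis}: $J^+_\nabla$ is integrable if and only if, at every $p\in M$, the Riemann curvature $R^\nabla_p$ lies in the largest $G$-invariant subspace of $\CC(V,\wt{g_0})$ on which $4i$ is not an eigenvalue of the natural action of $\wt{j_0}$. The task thus reduces to decomposing $\CC(V,\wt{g_0})$ under $SO(V,\wt{g_0})$ and identifying, on each irreducible summand, whether $4i$ arises as an eigenvalue.

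For $2n>4$, the $SO(V,\wt{g_0})$-irreducible decomposition of $\CC(V,\wt{g_0})$ coincides with the $O(V,\wt{g_0})$-decomposition $R=S^\nabla+E^\nabla+C^\nabla$ used in Proposition~\ref{prop:pseudoRnonor}, so the analysis there carries over unchanged: $4i$ can appear only in the Weyl part, and integrability is equivalent to $C^\nabla=0$.

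In dimension four, the Weyl summand decomposes further under $SO(V,\wt{g_0})$ into self-dual and anti-self-dual parts, reflecting the splitting $\Lambda^2 V^*=\Lambda^2_+ V^*\oplus \Lambda^2_- V^*$ into $\pm 1$-eigenspaces of the Hodge star; both summands are stable under the action of $\wt{j_0}$, since $\wt{j_0}$ commutes with $*$. The key step is to locate the $4i$-eigenspace within this splitting. I would choose an oriented $\wt{g_0}$-orthonormal basis $(e_1,e_2,e_3,e_4)$ with $\wt{j_0}e_1=e_2$ and $\wt{j_0}e_3=e_4$, and diagonalise the natural action of $\wt{j_0}$ on $\Lambda^2 V^*\otimes\bbC$: its non-zero eigenvalues are $\pm 2i$, each on a one-complex-dimensional eigenline generated by $dz^1\wedge dz^2$ and by $\overline{dz^1}\wedge\overline{dz^2}$ respectively, where $dz^k$ are the standard $(1,0)$-forms. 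The $\pm 4i$-eigenspace on curvature-type tensors is the trace-free symmetric square of these lines, hence is supported on whichever of $\Lambda^2_\pm V^*$ contains the two-dimensional real subspace $\Lambda^{2,0}\oplus\Lambda^{0,2}$ in its complexification.

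This containment is then read off from the Hodge-star table on $\{e^{ij}\}$. In signature $(4,0)$ or $(0,4)$ one computes that the K\"ahler form $\omega_{\wt{j_0}}=\wt{g_0}(\wt{j_0}\cdot,\cdot)=e^{12}+e^{34}$, together with the real and imaginary parts of $dz^1\wedge dz^2$, all belong to one and the same of the two summands $\Lambda^2_\pm V^*$, whereas in signature $(2,2)$ these same forms belong to the other summand; the flip is caused by the factors $g(e_i,e_i)g(e_j,e_j)$ entering the computation of $*e^{ij}$. Combined with the paper's convention that ``self-dual'' means vanishing on the $(-1)$-eigenspace of $*$, this places the $4i$-carrying irreducible piece of $C^\nabla$ exactly where its forced vanishing gives the stated duality condition: $C^\nabla$ self-dual in signatures $(4,0)$ and $(0,4)$, anti-self-dual in signature $(2,2)$. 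The main obstacle is the explicit Hodge-star calculation together with the careful verification of which $\Lambda^2_\pm V^*$ summand contains $\Lambda^{2,0}\oplus\Lambda^{0,2}$ in each signature, this flip being exactly what produces the asymmetry between the definite and indefinite cases.
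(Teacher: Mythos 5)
Your overall strategy coincides with the paper's: invoke the $4i$-eigenvalue criterion from Section~\ref{section:Nijenhuis}, observe that for $2n>4$ the $SO$- and $O$-decompositions of the curvature space agree so that Proposition~\ref{prop:pseudoRnonor} carries over, and in dimension $4$ decide which of the two Weyl summands carries the $4i$-eigenline by locating $\Lambda^{2,0}\oplus\Lambda^{0,2}$ (equivalently the K\"ahler form of $\wt{j_0}$) inside the $\pm1$-eigenspaces of $*$. The paper does exactly this, by exhibiting eigenvectors of $*$ of the form $e_1\wedge e_2+\epsilon\epsilon_1\epsilon_2\,\wt{j_0}e_1\wedge\wt{j_0}e_2$ and showing that avoiding the eigenvalue $4i$ for all compatible $j$ is equivalent to the Weyl tensor vanishing on the $(-\epsilon_1\epsilon_2)$-eigenspace of $*$.

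The gap is that the one computation which decides \emph{which} duality comes out in each signature is precisely the step you defer, and the conventions you set up would, if carried through, give the opposite of the stated answer. You take a positively oriented orthonormal basis with $\wt{j_0}e_1=e_2$, $\wt{j_0}e_3=e_4$; such a $j$ induces the orientation $e_1\wedge je_1\wedge e_3\wedge je_3=e_1\wedge e_2\wedge e_3\wedge e_4$, i.e.\ the \emph{given} one. But with the paper's $\wt{j_0}=\left(\begin{smallmatrix}0&-\id_2\\ \id_2&0\end{smallmatrix}\right)$ acting through an oriented frame, the compatible complex structures satisfy $je_1=e_3$, $je_2=e_4$ and induce $e_1\wedge e_3\wedge e_2\wedge e_4=-e_1\wedge e_2\wedge e_3\wedge e_4$, the \emph{opposite} orientation. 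This sign is exactly what the result hinges on: in signature $(4,0)$ your $j$ has self-dual K\"ahler form ($*(e^{12}+e^{34})=e^{12}+e^{34}$), so $\Lambda^{2,0}\oplus\Lambda^{0,2}$ sits in $\Lambda^2_+\otimes\bbC$, the $4i$-eigenvector lies in the Weyl summand supported on $\Lambda^2_+$, and its forced vanishing leaves the Weyl tensors vanishing on $\Lambda^2_+$ --- \emph{anti}-self-dual in the paper's terminology --- contradicting the statement; for the paper's compatible $j$ the K\"ahler form $e^{13}+e^{24}$ is anti-self-dual and one obtains ``self-dual'' as claimed. So what remains is not a routine verification: as written your set-up parametrises the wrong fibre (complex structures inducing the reversed orientation), and you must either recompute with a genuinely compatible $j$ or track the orientation reversal explicitly. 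You should also say a word about why avoiding $4i$ for \emph{all} $j$ in the fibre kills the \emph{entire} relevant Weyl summand, which requires its irreducibility under $SO(V,\wt{g_0})$ (the paper's ``largest $G$-invariant subspace'' formulation packages this).
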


\begin{proof} [Proof (in dimension $4$).]
In an oriented pseudo-orthonormal basis $\{e_1,\ldots,e_4\}$ with \\
${\wt{g_0}}(e_1,e_1)={\wt{g_0}}(e_3,e_3)=\epsilon_1$
and ${\wt{g_0}}(e_2,e_2)={\wt{g_0}}(e_4,e_4)=\epsilon_2$ and with 
${\wt{j_0}}=\begin{pmatrix} 0&-\id_2\\ \id_2 & 0\end{pmatrix}$ as before,
 the eigenspace of eigenvalue $\epsilon$  of the Hodge $*$ operator is spanned 
 by $e_1\wedge e_2+ \epsilon \epsilon_1\epsilon_2\,\, e_3\wedge e_4=e_1\wedge e_2 + \epsilon \epsilon_1\epsilon_2\,\,{\wt{j_0}}e_1\wedge {\wt{j_0}}e_2,$   
 $\, e_1\wedge e_3- \epsilon \,\, e_2\wedge e_4$ and 
 $e_1\wedge e_4+ \epsilon \epsilon_1\epsilon_2\,\, e_2\wedge e_3=e_1\wedge e_4 + \epsilon \epsilon_1\epsilon_2\,\,{\wt{j_0}}e_1\wedge {\wt{j_0}}e_4$. Hence, any tensor $\tilde{R}$ 
 vanishing on  the eigenspace of eigenvalue $\epsilon=-\epsilon_1 \epsilon_2$
 satisfies  $\tilde{R}({\wt{j_0}}\, 
\cdot \, ,{\wt{j_0}}\, 
\cdot \,)= \tilde{R}(\, 
\cdot \,,\, 
\cdot \,)$, hence $\tilde{R}\left((\id+ i{\wt{j_0}})\, 
\cdot \, ,(\id + i{\wt{j_0}})\, 
\cdot \,\right)=0$.
The largest $SO(V,g)$-invariant subspace of  Weyl  tensors on $V$ for which $4i$
is not an eigenvalue of the action of $\wt{j_0}$ is thus the space of Weyl
tensors vanishing on  the eigenspace of eigenvalue
$\epsilon=-\epsilon_1\epsilon_2$  of the Hodge $*$ operator. 
\end{proof}

%\smallskip

Observe that
\begin{eqnarray*}
g\left((\Id-ij)S^\nabla\left((\id- ij)X,(\id - ij) Y\right) (\id+ij)Z,T\right)\\
&&\kern-3.5in =\frac{ 2 \, scal(g)}{n(2n-1)}\left(g((\id- ij)X,Z) 
g((\id- ij)Y,T)-g((\id- ij)Y,Z)g((\id- ij)X,T)\right)
\end{eqnarray*}
hence
\begin{eqnarray*}
\textrm{Imaginary part of } \left(\Id-ij)\circ S^\nabla\left((\id- ij)X,
(\id - ij) X^\prime \right)\circ (\id+ij)\right)\\
&&\kern-3.6in =
\frac{ 2scal(g)}{n(2n-1)} \left(g(X^\prime ,\cdot)jX + g(jX^\prime ,
\cdot)X- g(X,\cdot)jX^\prime  - g(jX,\cdot)X^\prime \right)\\
&&\kern-3.6in =
\frac{ 2scal(g)}{n(2n-1)} \left[\,j\, , 
g(X^\prime ,\cdot)X - g(X,\cdot)X^\prime  \, \right];
\end{eqnarray*}
and this shows that the vertical part of the image of $\ N^{J_\nabla^-}$ at $j$
is the whole vertical tangent space
$\calV_j=[j,\End(E,g)_j]=[j,\End(T_pM,g_p)]$ whenever the space has
constant non-zero sectional curvature, i.e. when $R^\nabla=S^\nabla$ and
$scal(g)\neq 0$.

To summarise, we have

\begin{proposition}\label{prop:pseudoRi}
For  a pseudo-Riemannian manifold $(M,g)$ with no given orientation, the almost
complex structure $J^+_\nabla$ on the twistor space $J(M,g)$,  defined using the
Levi Civita connection $\nabla$, is integrable if and only if the Weyl component
of the Riemann curvature tensor vanishes, $C^\nabla=0$.\\
With a given orientation,  the almost complex structure $J^+_\nabla$ on the
twistor space $J(M,g)$,  defined using the Levi Civita connection $\nabla$, is
integrable if and only if the Weyl tensor $C^\nabla$ vanishes when $\dim M >4$.
In dimension $4$,it is integrable if and only if the the Weyl component of the
Riemann curvature tensor is self-dual when the signature is $(4,0)$ or $(0,4)$
and anti-self-dual when the signature is $(2,2)$. 

The almost complex structure $J^-_\nabla$ is never integrable.

If the space has non-vanishing constant sectional curvature, then the image of
the Nijenhuis tensor associated to $J_\nabla^-$ is the whole tangent space
$T_jJ(M,g)$ at any point $j\in J(M,g)$.

Observe that in this case ($C^\nabla=0$, $E^\nabla=0$ and $scal(g)\neq 0$),  the
closed $2$-form on $J(M,g)$ associated by  \eqnref{def:omegaJ(M,g,nabla)} to
$\nabla$, $\omega^{J(M,g,\nabla)}$, is symplectic since $\Tr (R^\nabla(X,Y)\circ
j)=\frac{scal(g)}{n(2n-1)}g(X,jY)$. Also in that case, the almost complex
structures $J^\pm_\nabla$ are compatible with the symplectic $2$-form, in the
sense of equation \eqnref{eq:compat}, i.e. $\omega^{J(M,g,\nabla)}$ is of type
$(1,1)$ with respect to $J^\pm_\nabla$; $J^+_\nabla$ is positive when $scal(g)$
is positive and $J^-_\nabla$ is positive when $scal(g)$ is negative.
\end{proposition}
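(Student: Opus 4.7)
The plan is to assemble the statement from results already established in the excerpt, with only one small new verification needed for the constant-sectional-curvature case. Everything except the last paragraph of the statement is already contained in earlier results: the integrability criteria for $J^+_\nabla$ (both the non-oriented case and the oriented case including the self-dual/anti-self-dual dichotomy in dimension $4$) are exactly Propositions \ref{prop:pseudoRnonor} and \ref{prop:pseudoRor}, and the fact that $J^-_\nabla$ is never integrable together with $\Image N^{J^-_\nabla}_j \supset \calH^\nabla_j$ is the content of the general Nijenhuis proposition of Section \ref{section:Nijenhuis}, since for any nonzero $S \in \calV_j$ and any $Y \in \calH^\nabla_j$ with $jY$ not annihilated by $S$ one has $N^{J^-_\nabla}_j(S, Y) = 2 S(jY) \ne 0$. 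These parts require no further argument.

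For the constant sectional curvature claim, I combine the horizontal inclusion above with the computation done immediately before Proposition \ref{prop:pseudoRi}, which showed that the vertical part of $N^{J^-_\nabla}_j$ evaluated on two horizontal vectors $X, X' \in T_pM$ equals, up to the nonzero factor $2\,scal(g)/(n(2n-1))$, the bracket $[j,\, g(X', \cdot)X - g(X, \cdot)X']$. The linear-algebraic step to carry out is to observe that the rank-two skew endomorphisms $g(X', \cdot) X - g(X, \cdot) X'$ span the whole space $\End(T_pM, g_p)$ of $g_p$-antisymmetric endomorphisms; bracketing with $j$ then sweeps out all of $\calV_j = [j, \End(T_pM, g_p)]$, which together with the horizontal inclusion fills $T_jJ(M,g)$.

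For the $2$-form statement, substitute $R^\nabla = S^\nabla$ into the trace appearing in \eqref{def:omegaJ(M,g,nabla)} and compute, in a pseudo-orthonormal frame, that
\[
\Tr_\bbR(S^\nabla(X, Y)\circ j) = \frac{scal(g)}{n(2n-1)}\, g(X, jY).
\]
Since $g$ is nondegenerate and $j$ is invertible, this $2$-form on $T_pM$ is nondegenerate whenever $scal(g) \neq 0$, so Proposition \ref{prop:symplectic} gives that $\omega^{J(M,g,\nabla)}$ is symplectic. The compatibility condition \eqref{eq:Appendix} reduces to $g(jX, j(jY)) = g(X, jY)$, which is immediate from $j$ being a $g$-isometry.

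Finally, for positivity one evaluates $\omega^{J(M,g,\nabla)}(\Xi, J^\pm_\nabla \Xi)$ on horizontal and vertical $\Xi$ separately. On a horizontal $X$, the horizontal term in \eqref{def:omegaJ(M,g,nabla)} contributes $\pm \frac{2\,scal(g)}{n(2n-1)} g(X, X)$, since $j^2 = -\id$; on a vertical $S$, only the fibre term $i\Tr_\bbC([S, \Phi S])$ survives, and this is a fixed positive multiple of the scalar curvature under the same sign convention. Hence $J^+_\nabla$ tames $\omega^{J(M,g,\nabla)}$ precisely when $scal(g) > 0$ and $J^-_\nabla$ when $scal(g) < 0$. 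The only step that is not a direct citation is the surjectivity claim in the second paragraph, but this is a standard linear-algebra fact and presents the main (mild) obstacle in the argument.
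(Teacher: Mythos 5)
Your assembly of the proposition is exactly the route the paper takes: its ``proof'' is precisely the concatenation of Propositions \ref{prop:pseudoRnonor} and \ref{prop:pseudoRor}, the Nijenhuis-tensor proposition of Section \ref{section:Nijenhuis}, the displayed computation of the vertical part of $N^{J^-_\nabla}$ for $R^\nabla=S^\nabla$, and Proposition \ref{prop:symplectic}. Your one genuinely new step --- that the elementary skew endomorphisms $g(X',\cdot)X-g(X,\cdot)X'$ span $\End(T_pM,g_p)$, so that $[j,\cdot]$ carries their span onto all of $\calV_j$ --- is correct (they give $\dim\mathfrak{o}(2p,2q)$ independent elements in any pseudo-orthonormal basis) and is exactly what the paper leaves implicit. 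The trace identity $\Tr(S^\nabla(X,Y)\circ j)=\frac{scal(g)}{n(2n-1)}g(X,jY)$ and the verification of \eqnref{eq:Appendix} for $R^\nabla=S^\nabla$ are also right.

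The one flawed step is in your justification of positivity. The fibre contribution $i\Tr_\bbC([S,J^\pm_\nabla S])=-\Tr_\bbR\bigl(j[S,jS]\bigr)=-2\Tr_\bbR(S^2)$ is \emph{not} a multiple of the scalar curvature: it is the curvature-independent standard form on the fibre, positive definite on $\calV_j$ in the Riemannian case since $S$ is $g$-skew and hence $\Tr_\bbR(S^2)\le 0$. If it really were a positive multiple of $scal(g)$, as you assert, then for $scal(g)<0$ the vertical part of $\omega^{J(M,g,\nabla)}(\cdot,J^-_\nabla\cdot)$ would be negative and $J^-_\nabla$ could not be positive, contradicting your own (and the paper's) conclusion. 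With the corrected vertical contribution, the conclusion does follow from your horizontal computation $\omega^{J(M,g,\nabla)}(\Xi,J^\pm_\nabla\Xi)=\pm\frac{2\,scal(g)}{n(2n-1)}g(X,X)$ --- at least in the Riemannian case; in indefinite signature $g(X,X)$ changes sign, so ``positive'' must be read in the pseudo-K\"ahler sense, a point the paper itself does not elaborate either.
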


Hence the twistor space $J(M,g)$ on a pseudo-Riemannian manifold with
non-vanishing constant sectional curvature has a natural symplectic structure
$\omega^{J(M,g,\nabla)}$ and two  natural compatible almost complex structures,
$J^+_\nabla$ yielding a pseudo-K\"ahler structure on this twistor space and
$J^-_\nabla$ being maximally non-integrable in the sense that the image of the
corresponding Nijenhuis tensor is the whole tangent space at every point.
\vskip1cm

More generally, for the twistor space on a Riemannian space, Reznikov
\cite{bib:Reznikov}  has proven that the closed $2$-form
$\omega^{J(M,g,\nabla)}$ (defined by \eqnref{def:omegaJ(M,g,nabla)}) is
symplectic if the sectional curvature is sufficiently pinched. The proof relies
on  Berger's inequalities \cite{bib:Berger}, all components
$R_{ijk\ell}:=g_p(R_p(e_i,e_j)e_k,e_l) $ of the curvature tensor in an
orthonormal basis $\{e_i;i\le 2n\}$ of $T_pM$ are very small unless
$\{i,j\}=\{k,l\}$. Hence the $2$-form $X,Y\mapsto R^\nabla(X,Y)\circ j$ is very
close to the $2$-form $X,Y\mapsto\frac{scal(g)}{n(2n-1)}g(X,jY)$ and is thus 
non-degenerate.

In a similar way, the endomorphism of $T_pM$ defined by 
\[
\textrm{Imaginary part of } \left(\Id-ij)\circ R^\nabla_p\left((\id\pm ij)X,
(\id \pm ij) X^\prime \right)\circ (\id+ij)\right)
\] 
is very close to $ \frac{ 2scal(g)}{n(2n-1)} \left[\,j\, , g(X^\prime ,\cdot)X -
g(X,\cdot)X^\prime  \, \right]$ hence the vertical part of the image of
$N^{{J_{\nabla}^-}}_j$ consists of all the endomorphisms  $[j, A]$ of $T_pM$
where $p=\pi(j)$ and $A\in \End(T_pM,g_p)$.

\begin{proposition}\label{prop:pinchedcurv}
Given any positive integer $n$, there exists an $\epsilon(n)$ such that, if the
sectional curvature of a Riemannian manifold $(M,g)$ of dimension $2n$ is
$\epsilon(n)$-pinched, the almost complex structure $J^-_\nabla$ on this twistor
space, defined using the Levi Civita connection $\nabla$, is maximally
non-integrable (i.e. the image of the corresponding Nijenhuis tensor is the
whole tangent space at every point).
\end{proposition}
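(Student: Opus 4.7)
The plan is as follows. The horizontal containment $\Image N^{J^-_\nabla}_j \supset \calH^\nabla_j$ has already been established in the earlier Proposition of Section \ref{section:Nijenhuis}, so the task reduces to showing that the vertical part of the image of $N^{J^-_\nabla}_j$ fills the whole vertical space $\calV_j$. From the formulas in Section \ref{section:Nijenhuis}, this vertical part evaluated at horizontal lifts of $X, X' \in T_pM$ is
\[
F^{R^\nabla}_j(X, X') := \mathrm{Im}\left((\Id - ij) \circ R^\nabla_p\left((\id - ij)X, (\id - ij)X'\right) \circ (\id + ij)\right).
\]
When $R^\nabla = S^\nabla$ is the constant-curvature model with the same scalar curvature, this expression reduces, as computed in the proof of Proposition \ref{prop:pseudoRi}, to $K_j(X, X') = \frac{2\, scal(g)}{n(2n-1)}\bigl[j,\, g(X',\cdot)X - g(X,\cdot)X'\bigr]$. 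Since the rank-two skew endomorphisms $g(X',\cdot)X - g(X,\cdot)X'$ span all of $\End(T_pM,g_p)$ as $X, X'$ vary, and the bracket map $A \mapsto [j,A]$ surjects $\End(T_pM,g_p)$ onto $\calV_j$ by \eqnref{eq:[j,s]}, the linear map $K_j \colon \Wedge^2 T_pM \to \calV_j$ is surjective whenever $scal(g) \neq 0$.

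Next, I would invoke Berger's inequalities \cite{bib:Berger}: under pointwise $\epsilon$-pinching, all components $R_{ijk\ell}$ with $\{i,j\} \neq \{k,\ell\}$ of the curvature in an orthonormal frame are of size $O(\epsilon)$, while the sectional components are $O(\epsilon)$-close to the constant value $\frac{scal(g)}{2n(2n-1)}$. Hence at every $p$, the tensor $R^\nabla_p$ is an $O(\epsilon)$-perturbation of a constant-curvature tensor, and consequently the bilinear map $F^{R^\nabla}_j$ lies within $O(\epsilon)$ of $K_j$ in any fixed norm, uniformly as $j$ varies in the fibre $J(M,g)_p$.

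Finally, surjectivity of a linear map between fixed finite-dimensional vector spaces is an open condition. Since the twistor fibre $\Ogroup(2n)/U(n)$ is compact and $K_j$ depends smoothly and $\Ogroup(2n)$-equivariantly on $j$, the ``minimum singular value'' of $K_j$ admits a strictly positive lower bound depending only on $n$; hence there exists $\epsilon(n) > 0$ such that $\epsilon(n)$-pinching forces $F^{R^\nabla}_j$ to remain surjective for every $j$ in every fibre. Combined with the horizontal containment, this yields $\Image N^{J^-_\nabla}_j = T_j J(M,g)$ for every $j$. The main obstacle will be producing a single $\epsilon(n)$ uniform in both $p \in M$ and $j \in J(M,g)_p$ rather than a point-dependent one; this is handled precisely by the compactness of the fibre together with the $\Ogroup(2n)$-equivariance of the constant-curvature model map $K_j$, which together bypass any global compactness assumption on $M$.
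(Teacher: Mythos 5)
Your proposal is correct and follows essentially the same route as the paper: reduce to the vertical part of $N^{J^-_\nabla}$, observe that in the constant-curvature model it equals $\frac{2\,scal(g)}{n(2n-1)}\bigl[\,j,\,g(X',\cdot)X-g(X,\cdot)X'\,\bigr]$ and is therefore surjective onto $\calV_j$, and invoke Berger's inequalities to conclude that under sufficient pinching the true vertical part is a small perturbation of this and hence still surjective, while the horizontal space is always contained in the image. Your closing remark on extracting a single $\epsilon(n)$ via compactness of the fibre $\Ogroup(2n)/U(n)$ and equivariance merely makes explicit a uniformity point the paper leaves implicit.
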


We shall now study when  each of the almost complex structures $J^\pm_\nabla$ is
compatible (in the classical sense of equation (\ref{eq:compat})) with the
$2$-form  $\omega^{J(M,g,\nabla)}$ (defined by equation
\eqnref{def:omegaJ(M,g,nabla)}); we have seen in Section \ref{section:2form}
that it is the case  if and only if equation \eqnref{eq:Appendix} is satisfied:
$ \Tr_\bbR \left(R^\nabla_{p} (jX, jY)\circ j\right)=\Tr_\bbR \left(R^\nabla_{p} (X,
Y)\circ j\right)$ for all $ p\in M,\, X,Y \in T_pM, j\in J(M,g)_p.$

\begin{definition}\label{def:Omega1}
For $R \in \CC(V,g_p)$ and $j \in J(M,g)_p$ let $\Omega^{R,j}_1(X,Y) = \Tr_\bbR
\left(R (X, Y)\circ j\right)$ for $X,Y \in V$. 
\end{definition}

The condition of compatibility
\eqnref{eq:Appendix} is that $\Omega^{R,j}_1(jX,jY) = \Omega^{R,j}_1(X,Y)$ for
all $X,Y \in V$ so if we define 

\begin{definition}\label{def:Omega2}
 $\Omega^{R,j}_2(X,Y) =
\Omega^{R,j}_1(jX,jY)-\Omega^{R,j}_1(X,Y)$, 
\end{definition}

\noindent then the condition for compatibility becomes
$\Omega^{R,j}_2=0$ for all $j\in J(M,g)_p$.

\begin{proposition} \label{prop:Appendix}
Let $(M,g)$ be a pseudo-Riemannian manifold of dimension $2n \ge4$ with Levi
Civita connection $\nabla$. Condition \eqnref{eq:Appendix} holds (i.e. $J^\pm_\nabla$
are compatible with the closed 2-form
$\omega^{J(M,g,\nabla)}$) for $M$
non-oriented and $2n\ge4$ or $M$ oriented and $2n\ge6$  if and only if the Weyl
component $C^\nabla$ of the curvature $R^\nabla$ vanishes. If $M$ is oriented
and $2n=4$,   Condition \eqnref{eq:Appendix} holds if and only if the Weyl
component of the Riemann curvature tensor is self-dual when the signature is
$(4,0)$ or $(0,4)$ and anti-self-dual when the signature is $(2,2)$. 
\end{proposition}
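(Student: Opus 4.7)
The plan is to exploit the $O(2p,2q)$-irreducible decomposition $R^\nabla=S^\nabla+E^\nabla+C^\nabla$ and, since \eqnref{eq:Appendix} is linear in $R^\nabla$, to check it on each summand separately. The scalar and traceless-Ricci parts should drop out trivially, and the analysis should reduce to a condition on the Weyl component which can then be identified by $G$-equivariance together with the previously established integrability condition for $J^+_\nabla$.

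First I would verify that \eqnref{eq:Appendix} is automatic on $S^\nabla$ and $E^\nabla$. Working in a pseudo-orthonormal frame $\{e_i\}$ with $\epsilon_i=g(e_i,e_i)$ and using the standard identities $\sum_i\epsilon_i\,g(Y,e_i)\,e_i=Y$ and $g(jU,V)+g(U,jV)=0$, a direct computation starting from the explicit formulas given above for $S^\nabla$ and $E^\nabla$ yields
\[
\Tr_\bbR\!\bigl(S^\nabla(X,Y)\circ j\bigr)=\tfrac{scal(g)}{n(2n-1)}\,g(X,jY),\qquad
\Tr_\bbR\!\bigl(E^\nabla(X,Y)\circ j\bigr)=\tfrac{1}{n-1}\bigl(\widehat{Ric}(X,jY)-\widehat{Ric}(jX,Y)\bigr).
\]
Since $j^2=-\id$ and $\widehat{Ric}$ is symmetric, both 2-forms in $(X,Y)$ are $j$-invariant, hence $\Omega_2^{S^\nabla,j}=\Omega_2^{E^\nabla,j}=0$ identically and \eqnref{eq:Appendix} reduces to the analogous condition on the Weyl piece: $\Omega_2^{C^\nabla,j}=0$ for every $j\in J(M,g)_p$.

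Next I would treat this Weyl condition by $G$-equivariance. Writing $j=A\wt{j_0}A^{-1}$ with $A\in G$ and mimicking the argument of Section \ref{section:Nijenhuis}, the condition becomes a single $\wt{j_0}$-equation imposed on the frame-expression $\widetilde{C}^\nabla$, cut out by a $G$-equivariant linear map on the space of Weyl-type tensors. Under $O(2p,2q)$ this space is irreducible; under $SO(2p,2q)$ it remains irreducible for $2n\ge 6$ and splits, via the Hodge $\ast$, as $\mathcal{W}^+\oplus\mathcal{W}^-$ in dimension $4$. Hence the ``compatible Weyl'' subspace is either trivial, the whole Weyl space, or (in dimension $4$ oriented) one of the $\mathcal{W}^\pm$.

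To decide which case occurs I would compare with the integrability of $J^+_\nabla$ analyzed in Propositions \ref{prop:pseudoRnonor} and \ref{prop:pseudoRor}: the goal is to prove that integrability of $J^+_\nabla$ implies \eqnref{eq:Appendix}. Conceptually, when $J^+_\nabla$ is integrable the twistor space is a complex manifold, $(E,\Phi)$ acquires a holomorphic vector bundle structure for which $D^E$ acts as a Chern-type connection, and the complex trace of its curvature---which up to a constant is $\omega^{J(M,g,\nabla)}$---is automatically of type $(1,1)$. Combined with the irreducibility established in the previous step, this forces the compatible subspace to coincide with the integrability subspace and reproduces exactly the dichotomy announced in the proposition. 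The hard part will be to make this ``integrability $\Rightarrow$ compatibility'' implication precise from the algebraic formulas of Section \ref{section:Nijenhuis}, either via the holomorphic-bundle route sketched above or by directly expanding both conditions on the $\wt{j_0}$-eigenspaces of the Weyl space; once this is done, representation-theoretic rigidity disposes of the remaining cases with no further ad hoc calculation.
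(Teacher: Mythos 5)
Your first two steps are sound and agree with the paper's strategy: the direct verification that $\Omega_2^{S^\nabla,j}=\Omega_2^{E^\nabla,j}=0$, and the observation that $\{C \textrm{ of Weyl type}\suchthat \Omega_2^{C,j}=0 \ \forall j\}$ is a $G$-invariant subspace of the Weyl tensors, hence by irreducibility either $0$ or everything (resp.\ $0$, one of the two self-duality halves, or their sum, in the oriented $4$-dimensional case). The gap is in your final step. To conclude you must rule out the possibility that this invariant subspace is the \emph{whole} Weyl space, i.e.\ you must exhibit at least one Weyl tensor $C$ and one $j$ with $\Omega_2^{C,j}\neq 0$. Your proposed implication ``integrability of $J^+_\nabla$ implies compatibility'' points in the wrong direction for this: it would only show that the integrability subspace is \emph{contained} in the compatibility subspace, and in precisely the cases at issue (non-oriented $2n\ge4$, oriented $2n\ge6$) the integrability subspace of Weyl tensors is $\{0\}$, so the containment is vacuous and leaves open that every Weyl tensor is compatible --- in which case the proposition would be false. (The holomorphic-bundle heuristic is also unjustified as stated: a connection preserving $\Phi$ on a complex manifold need not have curvature of type $(1,1)$; that requires its $(0,1)$-part to define a holomorphic structure on $(E,\Phi)$, which is an additional hypothesis, not a consequence of integrability of the base.)

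What is actually needed --- and what the paper supplies via the Tricerri--Vanhecke construction --- is an explicit family of Weyl tensors violating compatibility: for $S\in\scrV_3^j$ one builds $\psi_j(S)\in\CC(V,g_p)$, checks that it is Ricci-flat and hence of Weyl type, and computes $\Omega_2^{\psi_j(S),j}(X,Y)=-8(n+1)S(X,jY)$ (Lemma \ref{lem:W9new}). This produces the projections $P_j$ and the nonzero $G$-invariant subspace $\scrW=\Span_j \scrW_9^j$ of Weyl tensors on which compatibility fails; irreducibility then forces $\scrW$ to be all of the Weyl tensors when $2n\ge4$ non-oriented or $2n\ge6$ oriented, while in the oriented $4$-dimensional case an explicit basis computation identifies $\scrW$ with the anti-self-dual or self-dual half according to the sign of $\epsilon_1\epsilon_2$. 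Without some such witness your representation-theoretic reduction cannot be closed. (Incidentally, your coefficient $\tfrac1{n-1}$ in $\Tr_\bbR(E^\nabla(X,Y)\circ j)$ differs from the paper's $\tfrac1{n+1}$, but this is immaterial: only the $j$-invariance of the resulting $2$-form is used.)
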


\begin{proof}
Whenever the Weyl tensor vanishes, the remaining two terms $S^\nabla, E^\nabla$
satisfy
\begin{eqnarray*}
\Tr(S_p^\nabla(X,Y)\circ{}j)&=&\frac{scal(g)}{n(2n-1)}g_p(X,jY),\\
\Tr_\bbR \left(E^\nabla_{p} (X, Y)\circ{}j\right)
&=& \frac{1}{n+1}\left(\widehat{Ric}_p(X,jY)-\widehat{Ric}_p(Y,jX)\right)
\end{eqnarray*}
for all $p\in M,\, X,Y \in T_pM, j\in J(M,g)_p,$ and both the right-hand sides
satisfy condition \eqnref{eq:Appendix} as was already mentioned in Proposition
\ref{prop:pseudoRi} . 

The remainder of this section is devoted to the proof of the converse; we use a
construction from the analysis of the curvature in the (positive definite)
almost Hermitian case due to Tricerri and Vanhecke \cite[page
372]{bib:TricerriVanhecke} but which makes sense in the bundle $J(M,g)$ of
compatible almost complex structures where $g$ is pseudo-Riemannian.

Fix $p\in M$, let $V=T_pM$, and $j \in J(M,g)_p$. We set
\[
\scrV^j_3 =  \{S \in \wedge^2V^* \suchthat S(jX,jY) = -S(X,Y)\,\,\forall X,Y \in V \},
\]
then for $S \in \scrV^j_3$ and $\psi_j(S) \in \wedge^2V^*\otimes \End(V)$ defined by
\begin{eqnarray*}
\kern-18pt g_p(\psi_j(S)(X,Y)Z,W) &=& 2g_p(X,jY)S(Z,jW) 
+ 2g_p(Z,jW)S(X,jY) \\
&&\kern-20pt\mbox{}+ g_p(X,jZ)S(Y,jW) + g_p(Y,jW)S(X,jZ) \\
&&\kern-20pt\mbox{}- g_p(X,jW)S(Y,jZ) - g_p(Y,jZ)S(X,jW),
\end{eqnarray*}
$\psi_j(S)$ is in $\CC(V,g_p)$. With $s\in \End V$ defined by $g(sX,Y)=S(X,Y)$,
we have
\begin{eqnarray*}
\psi_j(S)(X,Y)Z&=&-2g_p(X,jY)jsZ - 2S(X,jY)jZ \\
&&- g_p(X,jZ)jsY-S(X,jZ)jY +S(Y,jZ)jX+ g_p(Y,jZ)jsX.
\end{eqnarray*}
A simple computation shows that the Ricci trace of $\psi_j(S)$ is zero
for all $S \in \scrV_3$:
\begin{eqnarray*}
\Tr[Y\mapsto \psi_j(S)(X,Y)Z]&=&2g_p(X,sZ)+ 2S(X,Z)\\
&&\mbox{}- g_p(X,jZ)\Tr(js)-S(X,jZ)\Tr j\\
&&\mbox{}+S(jX,jZ)+g_p(jsX,jZ)\\
&=& 2S(Z,X)+2S(X,Z)-S(X,Z)+S(X,Z)\\
&=&0
\end{eqnarray*}
since $j$ and $js$ are  traceless because $g(s \cdot,\cdot)=S(\cdot,\cdot)$ and
$g(j\cdot,\cdot)$ are skew-symmetric. Hence $\psi_j(S)$ lies in the space of Weyl
tensors. 

\begin{remark}
In \cite{bib:TricerriVanhecke}, where only the positive definite metric case is
discussed, the space $\psi_j(\scrV_3)$ is one of the 10 irreducible components
of the orthogonal Riemann curvature type tensors under the action of the unitary
group and is there called $\scrW_9$. It can be shown to be the only component
with non-vanishing $\Omega^{R,j}_2$. For this reason we make the definition
below in the pseudo-Riemannian case.
\end{remark}

\begin{definition}
Put $\scrW^j_9 = \psi_j(\scrV_3^j)$ then:
\end{definition}

\begin{lemma} \label{lem:W9new}
If $R \in \scrW^j_9$ then $\Omega^{R,j}_2(X,Y) = -8(n+1)S(X,jY)$ where $R=
\psi_j(S)$ with $S \in \scrV_3^j$.
\end{lemma}

\begin {proof}
If $R \in \scrW_9^j$ then $R = \psi_j(S)$ with $S$ an antisymmetric bilinear form in
$\scrV_3^j$, with $S(jX,jY) = -S(X,Y)$ and we have
\begin{eqnarray*}
\Omega^{R,j}_1(X,Y)&=&\Tr\left( \psi_j(S)(X,Y)j\right)\\
&=&-2g_p(X,jY)\Tr (jsj) + 2S(X,jY)\Tr(\Id) \\
&& \qquad\mbox{}+g_p(X,jsY)+g_p(sX,jY)-g_p(sY,jX)-g_p(Y,jsX)\\
&=&4nS(X,jY)-S(Y,jX)+S(X,jY)-S(Y,jX)+S(X,Y) \\
&=&4(n+1)S(X,jY)
\end{eqnarray*}
since $S(Y,jX) = - S(jX,Y) = S(j^2X,jY) = -S(X,jY)$ and also $jsj=s$ so $\Tr(jsj) = 0$.
Then
\[
\Omega^{R,j}_2(X,Y) = 4(n+1)S(jX,j^2Y) - 4(n+1)S(X,jY) =-8(n+1)S(X,jY).
\]
\end{proof}

Let $R \in \CC(V,g_p)$ be any curvature and set $S^{R,j}(X,Y) =
\frac{1}{8(n+1)}\Omega_2^{R,j}(X,jY)$ then Lemma \ref{lem:W9new} implies
$R=\psi_j(S^R)$ when $R \in \scrW_9^j$. We can then define $P_j(R) = \psi_j(S^{R,j})$
for any $R \in \CC(V,g_p)$. 
The following Lemma is obvious.
 
\begin{lemma} \label{lem:obvious}
Let $j \in J(M,g)_p$ and $h \in O(V,g_p)$ Then
\begin{itemize}
\item $P_j$ is a linear endomorphism of the space $\CC(V,g_p)$ of curvature
tensors with ${P_j}^2=P_j$ and with image in $\scrW_9^j$ a subspace of Weyl tensors.
\item $P_{hjh^{-1}} = h P_jh^{-1}$ for the natural action of $O(V,g_p)$ on 
curvature tensors.
\end{itemize}
\end{lemma}

We are now ready to complete the Proof of Proposition \ref{prop:Appendix}. It is
a consequence of Lemma \ref{lem:obvious} that any curvature $R \in \CC(V,g_p)$
with $\Omega_2^{R,j} = 0$ is in the kernel of the projection $P_j$ for each $j
\in J(M,g)_p$ and hence in the intersection of these kernels. This intersection
will then be disjoint from the span $\scrW$ of the images $\scrW^j_9$ of $P_j$
as $j$ varies. From the equivariance property of Lemma \ref{lem:obvious} it
follows that $\scrW$ is a non-zero $O(V,g_p)$-invariant subspace of the Weyl
tensors. But the Weyl tensors are irreducible under the full orthogonal group
when $2n\ge 4$ \cite[page 47]{bib:Besse} so $R$ is of Ricci type. When there is
an orientation, $\scrW$ is a non-zero $SO(V,g_p)$-invariant subspace of the Weyl
tensors. In dimension $2n>4$, the Weyl tensors are irreducible under
$SO(V,g_p)$. In dimension $4$, we compute in a pseudo-orthonormal oriented basis
$\{ e_1,\ldots ,e_4\}$ in which 
$g=\begin{pmatrix}\epsilon_1 &0 &0&0\\0 & \epsilon_2 &0&0\\0 &0&\epsilon_1 &0\\
0&0&0&\epsilon_2 \end{pmatrix}$ and $j=\begin{pmatrix}0 &0 &-1&0\\0 & 0 &0&-1\\1 &0&0 &0\\
0&1&0&0 \end{pmatrix}$; then any $S \in \scrV^j_3 $ has the form 
$S=\begin{pmatrix}0 &A &0&B\\-A & 0 &B&0\\0 &B&0 &-A\\
-B&0&A&0 \end{pmatrix}$. The corresponding Weyl tensor 
$\psi_j(S)(X,Y)$% $=-2g(X,jY)js - 2S(X,jY)j
%+ g(jX,\cdot)jsY-S(jX,\cdot)jY +S(jY,\cdot)jX- g(jY,\cdot)jsX$
 satisfies
\begin{eqnarray*}
\psi_j(S)(e_1,e_2)&=&-\psi_j(S)(e_3,e_4) \\
 \psi_j(S)(e_1,e_3)&=&\epsilon_1 \epsilon_2\psi_j(S)(e_2,e_4)\\
 \psi_j(S)(e_1,e_4)&=&-\psi_j(S)(e_2,e_3).
\end{eqnarray*}
Since the Hodge star dual is given by 
\begin{eqnarray*}
*(e_1\wedge e_2)&=&\epsilon_1 \epsilon_2\,\,\, e_3\wedge e_4\\
*(e_1\wedge e_3)&=&-\, \, e_2\wedge e_4\\
*(e_1\wedge e_4)&=&\epsilon_1 \epsilon_2\,\,\,  e_2\wedge e_3,
\end{eqnarray*}
we see that $\psi_j(S)$ viewed as a map from $\Lambda^2 T^*_pM$ into itself,
vanishes on the $\epsilon_1 \epsilon_2$-eigenspace of the Hodge dual.
This shows that $\scrW$ is the space of anti-self-dual Weyl tensors when 
$\epsilon_1 \epsilon_2=1$
and the space of self-dual Weyl tensors when $\epsilon_1 \epsilon_2=-1$.

This completes
the proof. 
\end{proof}

\section{Symplectic structure} 

We consider  a symplectic manifold $(M,\omega)$ of dimension $2n\ge 4$;  we
shall use in this section the more classical notation of $\omega$ (instead of
$g$) for the symplectic structure. Let $\Omega$ be a non degenerate
skew-symmetric  bilinear form on a real vector space $V$ of dimension $2n$. A
symplectic frame at a point $p$ is a map $\xi: V\rightarrow T_pM$ which is a
linear isomorphism between $(V,\Omega)$ and $(T_pM,\omega_p)$;  as mentioned in
section \ref{section:twistor} the bundle of symplectic frame
$F(M,\omega)\rightarrow M$ is a principal bundle with structure group $G =
Sp(V,\Omega)$ which is isomorphic to the simple split real Lie group $Sp(2n,\bbR)$
when one has chosen a basis of $V$ in which the matrix associated to $\Omega$ is
$\Omega_0=\left(\begin{array}{cc}0&I_n\\-I_n&0\end{array}\right)$.

The twistor bundle $J(M,\omega)\rightarrow M$ has fibre over the point $p$ given
by all complex structures $j$ on $T_pM$ which are compatible with $\omega_p$
(i.e. $\omega_p(jX,jY)=\omega_p(X,Y)$ for all $X,Y\in T_pM$) and positive (i.e.
$\omega_p(X,JX)>0$ for all $0\neq X\in T_pM$).

For the construction of the almost complex structures ${J_{\nabla}^{\pm}}$ on
the twistor bundle $J(M,\omega)$, one chooses a symplectic connection $\nabla$;
this is a linear torsion-free connection such that $\nabla\omega=0$; it is well
known that those exist but are not unique on any symplectic manifold.

\begin{definition}
The space $\CC(T_pM,\omega_p)$ of symplectic curvature type  tensors at a point
$p$ is isomorphic the subspace $\CC(V,\Omega)$ of elements $\wt{R} \in \Wedge^2
V^*\! \otimes \fraksp(V,\Omega)$ satisfying the Bianchi identity 
\[
\CC(V,\Omega) = \biggl\{\wt{R} \in \Wedge^2 V^*\! \otimes \fraksp(V,\Omega) \suchthat 
\cyclic_{X,Y,Z} \wt{R}(X,Y)Z = 0\biggr\}.
\]
where $\fraksp(V,\Omega)$ is the Lie algebra of $Sp(V,\Omega)$ and consists of
endomorphisms $\xi$ of $V$ with $\Omega(\xi X,Y) + \Omega(X, \xi Y) = 0$ for all
$X,Y$ in $V$ or, equivalently, $\Omega(\xi X, Y)$ is a symmetric bilinear form.
\end{definition}

The adjoint representation of $Sp(V,\Omega)$ on $\fraksp(V,\Omega)$ is isomorphic to
the irreducible representation $S^2 V^*$. The following elementary Lemma will be
useful in constructing elements of $\CC(V,\Omega)$.

\begin{lemma}\label{lem:sympcurvsym}
Given an element $A$ of $(\otimes^4 V)^* = \otimes^4 V^*$ satisfying
\begin{enumerate}
\item $A(X,Y,Z,T)$ is anti-symmetric in $X$ and $Y$;
\item $A(X,Y,Z,T)$ is symmetric in $Z$ and $T$;
\item $\cyclic_{X,Y,Z} A(X,Y,Z,T) = 0$
\end{enumerate}
then there is a unique element $\ul{A} \in \CC(V,\Omega)$ such that $A(X,Y,Z,T) =
\Omega(\ul{A}(X,Y)Z,T)$.
\end{lemma}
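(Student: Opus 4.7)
The plan is to let the nondegeneracy of $\Omega$ do essentially all the work, and then to observe that conditions (1), (2), (3) translate term-for-term into the three defining properties of $\CC(V,\Omega)$.

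First I would handle existence and uniqueness simultaneously. For each fixed triple $(X,Y,Z) \in V^3$ the map $T \mapsto A(X,Y,Z,T)$ is a linear functional on $V$, and since $\Omega$ is non-degenerate the map $V \to V^*$ sending $W \mapsto \Omega(W,\cdot)$ is an isomorphism. So there is a unique vector, which I name $\ul{A}(X,Y)Z$, characterized by
\begin{equation*}
\Omega(\ul{A}(X,Y)Z,T) = A(X,Y,Z,T) \qquad \forall\, T \in V.
\end{equation*}
Trilinearity in $(X,Y,Z)$ is automatic from trilinearity of $A$ in these slots together with the uniqueness, so this produces a well-defined element $\ul A \in V^*\otimes V^*\otimes \End V$, and any $\ul A$ satisfying the conclusion must coincide with it.

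Next I would check the three properties required for membership in $\CC(V,\Omega)$. Antisymmetry of $\ul{A}(X,Y)$ in $X,Y$ follows immediately from hypothesis (1) and the characterizing equation, so $\ul A$ is a section of $\Wedge^2 V^* \otimes \End V$. For each $X,Y$, to say $\ul{A}(X,Y) \in \fraksp(V,\Omega)$ amounts to saying that $(Z,T) \mapsto \Omega(\ul{A}(X,Y)Z,T)$ is symmetric in $Z,T$; but this bilinear form is exactly $A(X,Y,Z,T)$, which is symmetric in $Z,T$ by hypothesis (2). Finally, the Bianchi identity $\cyclic_{X,Y,Z} \ul{A}(X,Y)Z = 0$ is equivalent, by non-degeneracy of $\Omega$, to $\cyclic_{X,Y,Z} \Omega(\ul{A}(X,Y)Z, T) = 0$ for all $T$, which is precisely hypothesis (3).

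There is no real obstacle here; the lemma is a purely formal transcription lemma whose content is that the non-degenerate form $\Omega$ allows one to identify a fully covariant tensor with symplectic-curvature symmetries with an $\End V$-valued $2$-form sharing the same symmetries. The only thing to be careful about is keeping track of which slots are symmetric and which are antisymmetric, and verifying that the symmetry on the last two arguments is exactly what places $\ul A(X,Y)$ inside the Lie algebra $\fraksp(V,\Omega)$ rather than merely in $\End V$.
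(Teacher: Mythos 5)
Your proof is correct and is exactly the standard argument the paper has in mind: the paper states this lemma as ``elementary'' and gives no proof, and your transcription via the non-degeneracy of $\Omega$, with condition (2) identified as precisely the statement that $\ul{A}(X,Y)$ lands in $\fraksp(V,\Omega)$, is the intended reasoning. Nothing is missing.
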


Given an element $\wt{R} \in \CC(V,\Omega)$ we can form its Ricci trace
$\ric(\wt{R})$ given by
\[
\ric(\wt{R})(X,Y) = \Tr (Z \mapsto \wt{R}(X,Z)Y)
\]
which is a symmetric bilinear form on $V$. This gives a linear map $\ric \colon
\CC(V,\Omega) \longto S^2 V^*$ which is equivariant for the natural actions of
$Sp(V,\Omega)$. Given a symmetric bilinear form $r \in S^2 V^*$ let $\rho^r \in
\fraksp(V,\Omega)$ be defined by
\[
\Omega(\rho^r X,Y) = r(X,Y)
\]
and $E(r)$ by
\begin{eqnarray}
\Omega(E(r)(X,Y)Z,T) &=& \frac{-1}{2(n+1)}\biggl[2\Omega(X,Y)r(Z,T) 
+\Omega(X,Z)r(Y,T) - \Omega(Y,Z)r(X,T) \nonumber\\ 
&&\qquad\mbox{}+r(Y,Z)\Omega(X,T)  - r(X,Z)\Omega(Y,T)\biggr]. \label{eqn:Edef}
\end{eqnarray}

\begin{lemmadef}
$E(r)$ is in $\CC(V,\Omega)$ and $E \colon S^2V^* \longto \CC(V,\Omega)$ is an
equivariant linear map with $\ric(E(r)) = r$. $E(\ric(\wt{R}))$ is called the
Ricci component of $\wt{R}$ and $W(\wt{R}) = \wt{R}-E(\ric(\wt{R}))$ the Weyl
component. If we define 
\[
\EE(V,\Omega) = \{\wt{R} \in \CC(V,\Omega) \suchthat \wt{R}=E(\wt{R})\}
\,\textrm{ and } \,\WW(V,\Omega) = \{\wt{R}\in \CC(V,\Omega) \suchthat
E(\wt{R})=0\}
\] 
then both subspaces are irreducible under the action of $Sp(V,\Omega)$ and 
\[
\CC(V,\Omega) = \EE(V,\Omega) \oplus \WW(V,\Omega).
\]
\end{lemmadef}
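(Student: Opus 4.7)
The plan is to verify each assertion in turn using Lemma \ref{lem:sympcurvsym} and standard representation theory of $Sp(V,\Omega)$.

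First, to show $E(r) \in \CC(V,\Omega)$, I would set $A(X,Y,Z,T) := \Omega(E(r)(X,Y)Z,T)$ and apply Lemma \ref{lem:sympcurvsym}. Reading off the defining formula \eqnref{eqn:Edef}, the bracket splits into three pairs of terms: $2\Omega(X,Y)r(Z,T)$ is manifestly antisymmetric in $X,Y$ and symmetric in $Z,T$; the block $\Omega(X,Z)r(Y,T)-\Omega(Y,Z)r(X,T)$ is antisymmetric in $X,Y$; and the block $r(Y,Z)\Omega(X,T) - r(X,Z)\Omega(Y,T)$ is also antisymmetric in $X,Y$. Pairing up $\Omega(X,Z)r(Y,T) + r(Y,Z)\Omega(X,T)$ and the analogous pair with $X,Y$ swapped, one sees that $A$ is symmetric in $Z,T$. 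The cyclic Bianchi identity $\cyclic_{X,Y,Z}A(X,Y,Z,T)=0$ is a short direct computation where the $2\Omega(X,Y)r(Z,T)$ contributions cancel against the sum of the other two cyclic blocks; this is a standard cancellation and is the main routine calculation. Equivariance of $E$ is automatic because the formula uses only $\Omega$ and $r$.

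Next, to check $\ric(E(r))=r$, I would compute the trace $\Tr(Z \mapsto E(r)(X,Z)Y)$ using a symplectic basis $\{e_i,f_i\}$ with $\Omega(e_i,f_j)=\delta_{ij}$ and expand. Each of the five terms in \eqnref{eqn:Edef} contributes a trace; using $\sum_i \Omega(e_i,\cdot)f_i - \Omega(f_i,\cdot)e_i = \mathrm{Id}$ and tracking the resulting $2n$, $1$, and $-1$ factors, the total is $-\frac{1}{2(n+1)}$ times a combination adding up to $-2(n+1)r(X,Y)$, proving the normalisation is correct. This simultaneously shows $E$ is injective and $\ric$ is surjective, so $\EE(V,\Omega) = \Image(E)$ is isomorphic to $S^2V^*$ via $\ric$.

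The decomposition $\CC(V,\Omega) = \EE(V,\Omega) \oplus \WW(V,\Omega)$ is then immediate: since $\ric \circ E = \id_{S^2V^*}$, the map $E \circ \ric$ is an idempotent in $\End_{Sp}(\CC(V,\Omega))$ with image $\EE$ and kernel $\WW$, giving an equivariant direct sum. Irreducibility of $\EE$ follows because it is isomorphic as an $Sp(V,\Omega)$-module to $S^2V^* \cong \fraksp(V,\Omega)$, which is the (irreducible) adjoint representation of the simple group $Sp(V,\Omega)$.

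The hard part will be irreducibility of $\WW(V,\Omega)$. The approach I would take is highest-weight analysis: realise $\CC(V,\Omega) \subset \Wedge^2 V^* \otimes S^2V^*$ as a subspace cut out by the Bianchi identity, decompose $\Wedge^2V^* \otimes S^2V^*$ into $Sp$-irreducibles using branching from $Gl(V)$ to $Sp(V,\Omega)$ and contractions via $\Omega$, and identify $\WW$ with the unique irreducible component whose highest weight corresponds to the symmetric $(2,2)$-Young diagram with all $\Omega$-contractions vanishing. Alternatively one can cite the classical result that for $\dim V \ge 4$ the space of symplectic Weyl curvatures forms a single irreducible $Sp(V,\Omega)$-module (see e.g. Vaisman, or the treatment in the references already used in the paper). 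Combining these, both components are irreducible and we have the asserted decomposition.
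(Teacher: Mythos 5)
Your proposal is correct and follows essentially the same route as the paper, whose entire proof is the one-line observation that the three properties of Lemma \ref{lem:sympcurvsym} hold by a straightforward check and a citation of Vaisman for irreducibility; you simply fill in the details the paper leaves implicit (the $Z,T$-symmetry and Bianchi cancellations, the trace computation giving $\ric\circ E=\id$, and the resulting equivariant idempotent $E\circ\ric$ splitting $\CC(V,\Omega)$). Your normalisation check is right (the five terms contribute $-2r-0-r+r-2nr=-2(n+1)r$), and for the irreducibility of $\WW(V,\Omega)$ the citation you offer as an alternative is exactly what the paper does.
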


\begin{proof} 
To see that \eqnref{eqn:Edef} defines a curvature term we check that the three
properties in Lemma \ref{lem:sympcurvsym} hold which is straight forward. For
the irreducibility see \cite{bib:Vaisman}.
\end{proof}

\begin{definition}
This gives a decomposition of the curvature $R^\nabla$ of a symplectic connection:
\[
R^\nabla=E^\nabla + W^\nabla
\]
where $E^\nabla$ is defined in terms of the Ricci tensor  $Ric^\nabla(X,Y)=\Tr
[Z\rightarrow R^\nabla(X,Z)Y]$; it can be written as
\begin{eqnarray} E^\nabla(X,Y)Z &=& \frac{-1}{2(n+1)}
\biggl[2\omega(X,Y)\rho^\nabla Z +\omega(X,Z)\rho^\nabla Y 
- \omega(Y,Z)\rho^\nabla X \nonumber\\ 
&&\qquad\mbox{}+
Ric^\nabla(Y,Z)X - Ric^\nabla(X,Z)Y \biggr] \label{eqn:Edeff}
\end{eqnarray}
with $\omega(\rho^\nabla X,Y)=Ric^\nabla(X,Y)$ and of course the Weyl component
is $W^\nabla= R^\nabla - E^\nabla$. A symplectic connection $\nabla$ is said to
be of Ricci-type if $W^\nabla=0$, i.e. if $R^\nabla = E^\nabla$.
\end{definition}

Since ${\Omega}_0({\wt{j_0}}X,Y)+{\Omega}_0(X,{\wt{j_0}}Y) =0$, the 4i
eigenvalue can only arise in the $\WW(V,\Omega)$ tensor part and does so, hence
$J^+_\nabla$ is integrable in the symplectic context  if and only if
$W^\nabla=0$, as was observed by Vaisman \cite{bib:Vaisman2}.
 
If the  symplectic connection is of Ricci-type, then
\begin{eqnarray}\label{imageNj}
\textrm{Imaginary part of }\left((\Id-ij)\circ R^\nabla_p\left((\id- ij)X,
(\id - ij) Y \right)\circ (\id+ij)\right)&& \quad\label{eq:imN}\\
&& \kern-10cm=\frac{-2}{n+1}\left[ j, -\ul{X}\otimes B^\nabla_jY -\ul{B^\nabla_jY}\otimes X   
+\ul{Y}\otimes B^\nabla_jX +\ul{B^\nabla_jX}\otimes Y     \right]\nonumber
\end{eqnarray} 
for any $j\in J(M,\omega)_p$, where $B=\rho_p^\nabla -j\rho_p^\nabla j$ and
$\ul{U}=\omega_p(U,\cdot)$, and 
\begin{equation}\label{2formsympl}
\Tr_\bbR (R_p^\nabla(X,Y)\circ j)=-\frac{1}{n+1} 
\left( \omega_p(X,Y)\Tr (\rho_p^\nabla \circ j)+
\omega_p((\rho_p^\nabla \circ j+j\circ \rho_p^\nabla)X,Y)\right).
\end{equation}

\begin{proposition}\label{prop:sympl1}
The almost complex structure $J^+_\nabla$ on the twistor space $J(M,\omega)$ of
a symplectic manifold $(M,\omega)$ of dimension $2n\ge 4$, defined using a
symplectic connection $\nabla$, is integrable if and only if the curvature of
$\nabla$ is of Ricci-type, i.e. $W^\nabla$ vanishes.

The almost complex structure $J^-_\nabla$ is never integrable.

If the  symplectic connection is of Ricci-type, then:
 \begin{itemize}
\item 
the image of the Nijenhuis tensor associated to $J_\nabla^-$ at any point $j\in
J(M,\omega)$, is the whole horizontal tangent space plus the part of the
vertical tangent space given by the endomorphisms defined by formula
\eqnref{eq:imN};
\item 
the closed $2$-form on $J(M,\omega)$ associated by 
\eqnref{def:omegaJ(M,g,nabla)} to $\nabla$, $\omega^{J(M,\omega,\nabla)}$, is
symplectic if and only if
\[
\Tr_\bbR (\rho^\nabla_p \circ j) \Id +  (\rho_p^\nabla \circ j+j\circ \rho_p^\nabla)
\] 
has a vanishing kernel for all $ p\in M$ and all $j\in J(M,\omega)_p$;
\item 
the almost complex structures $J^\pm_\nabla$ are compatible with the symplectic
$2$-form in the sense of equation \eqnref{eq:compat}.
\end{itemize}
\end{proposition}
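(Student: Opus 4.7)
The plan is to handle the three conclusions in turn, feeding the symplectic curvature decomposition $\CC(V,\Omega) = \EE(V,\Omega) \oplus \WW(V,\Omega)$ into the general formulas of Sections \ref{section:Nijenhuis}--\ref{section:2form}. First, because any symplectic connection is torsion-free, the general proposition from Section \ref{section:Nijenhuis} gives both that the horizontal part of $N^{J^+_\nabla}$ vanishes on $\calH^\nabla \times \calH^\nabla$ and that $\Image N^{J^-_\nabla} \supset \calH^\nabla$ via $N^{J^-_\nabla}_j(S,Y) = 2 S_j(j Y_j)$, so $J^-_\nabla$ is never integrable. For $J^+_\nabla$, the criterion \eqnref{eq:j0condition} reduces integrability to $R^\nabla$ lying in the largest $Sp(V,\Omega)$-invariant subspace of $\CC(V,\Omega)$ on which $4i$ is not an eigenvalue of $\wt{j_0}$. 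A direct expansion of \eqnref{eqn:Edef} shows no $4i$-eigenvector appears in $\EE(V,\Omega)$, while $\WW(V,\Omega)$ does contain one by Vaisman \cite{bib:Vaisman2}; hence $J^+_\nabla$ is integrable iff $W^\nabla = 0$.

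In the Ricci-type case $R^\nabla = E^\nabla$ the vertical part of $N^{J^-_\nabla}_j$ is obtained by substituting the explicit formula \eqnref{eqn:Edeff} into the imaginary part expression $(\Id-ij)\circ R^\nabla_p((\id-ij)\cdot,(\id-ij)\cdot)\circ(\id+ij)$ computed in Section \ref{section:Nijenhuis}. Using the identities $\omega(jU, jV) = \omega(U, V)$, $\omega(jU, V) = -\omega(U, jV)$, together with $jAj = -A$ for $A = \rho^\nabla j + j\rho^\nabla$, a bookkeeping calculation collapses the resulting sum into the commutator displayed on the right of \eqnref{eq:imN}, identifying the vertical image as claimed with $B^\nabla_j = \rho^\nabla_p - j\rho^\nabla_p j$.

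For the 2-form conclusions, invoke Proposition \ref{prop:symplectic}: $\omega^{J(M,\omega,\nabla)}$ is symplectic iff the bilinear form $\Omega^{\nabla,j}(X,Y) = \Tr_\bbR(R^\nabla_p(X,Y)\circ j)$ is non-degenerate for every $j$. Tracing \eqnref{eqn:Edeff} yields formula \eqnref{2formsympl}, which factors as $\Omega^{\nabla,j}(X,Y) = -\tfrac{1}{n+1}\omega_p(BX,Y)$ with $B = \Tr_\bbR(\rho^\nabla_p\circ j)\Id + (\rho^\nabla_p\circ j + j\circ \rho^\nabla_p)$; since $\omega_p$ is non-degenerate, non-degeneracy of $\Omega^{\nabla,j}$ is equivalent to $\Ker B = 0$. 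The compatibility condition \eqnref{eq:Appendix} requires $\Tr(R^\nabla(jX,jY)\circ j) = \Tr(R^\nabla(X,Y)\circ j)$; the first summand of \eqnref{2formsympl} is invariant under $(X,Y)\mapsto(jX,jY)$ because $\omega_p$ is $j$-invariant, and the second summand is invariant via $jAj = -A$ combined with $\omega(jU,V) = -\omega(U,jV)$, yielding the claim.

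The main obstacle is the eigenvalue analysis of $\wt{j_0}$ on $\CC(V,\Omega)$ in the first step: one must verify that the $4i$-eigenspace is precisely contained in $\WW(V,\Omega)$. Granting \cite{bib:Vaisman2}, this reduces to a mechanical check on \eqnref{eqn:Edef}, since $E(r)$ is built from $\omega$ and a symmetric bilinear form $r$ and the action of $\wt{j_0}$ on each constituent is easily tracked. All remaining calculations are routine once the identities $jAj = -A$ and $\omega(jU,V) = -\omega(U,jV)$ are isolated at the outset.
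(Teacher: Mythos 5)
Your proposal is correct and follows essentially the same route as the paper: the $4i$-eigenvalue criterion from Section \ref{section:Nijenhuis} combined with the decomposition $\CC(V,\Omega)=\EE(V,\Omega)\oplus\WW(V,\Omega)$ (with the $4i$-eigenvalue absent from $\EE$ because $\wt{j_0}\in\fraksp(V,\Omega_0)$, and present in $\WW$ as in Vaisman), the general torsion-free computation of $N^{J^\pm_\nabla}$ for the $J^-_\nabla$ statements, and the substitution of \eqref{eqn:Edeff} into the curvature trace to obtain \eqref{2formsympl}, from which both the non-degeneracy criterion and the $(1,1)$-compatibility (via $j(\rho^\nabla j+j\rho^\nabla)j=-(\rho^\nabla j+j\rho^\nabla)$) follow. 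No gaps.
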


The remainder of this section is devoted to the study of this compatibility
(equation \eqnref{eq:Appendix}) for a general symplectic connection. We define
(as was done in Definitions \ref{def:Omega1} and \ref{def:Omega2}) for an
element $R \in \CC(V,\Omega)$ and  a $j \in J(V,\Omega)\simeq J(M,\omega)_p$ let
$\Omega^{R,j}_1(X,Y) = \Tr_\bbR \left(R (X, Y)\circ j\right)$ for $X,Y \in V$
and let $\Omega^{R,j}_2(X,Y) = \Omega^{R,j}_1(jX,jY)-\Omega^{R,j}_1(X,Y)$. The
compatibility condition becomes again $\Omega^{R,j}_2=0$ for all  $j$.

\begin{definition}
For $j \in J(V,\Omega)$ we set 
\[
\scrV(V,\Omega,j)= \{ S\in \Lambda^2(V^*) \suchthat S(jX,jY) = -S(X,Y)\}.
\]
\end{definition}

\begin{remark}
As a representation of $U(V,\Omega,j)$, $\scrV(V,\Omega,j)$ is a real
irreducible subspace of $\Lambda^2(V^*)$ and its complexification is
$\Lambda^{(2,0)} \oplus \Lambda^{(0,2)}$.
\end{remark}

\begin{definition}
For $S \in \scrV(V,\Omega,j)$ define $R(S,j)(X,Y)Z \in V$ by
\begin{eqnarray}
\Omega(R(S,j)(X,Y)Z,T) &=& -2\Omega(Z,jT)S(X,jY) 
+ \Omega(X,jZ)S(Y,jT) \nonumber\\
&&\qquad\mbox{}+ \Omega(X,jT)S(Y,jZ) - \Omega(Y,jT)S(X,jZ)\nonumber\\
 &&\qquad\qquad\mbox{}- \Omega(Y,jZ)S(X,jT) \label{eq:symppsi}
\end{eqnarray}
for all $T \in V$.
\end{definition}
The left hand side $\Omega(R(S,j)(X,Y)Z,T)$ is clearly antisymmetric in $X$ and
$Y$, symmetric in $Z$ and $T$ and satisfies the Bianchi identity
$\cyclic_{X,Y,X} R(S,j)(X,Y)Z=0$. A straight forward calculation shows it is
Ricci flat and so in $\WW(V,\Omega)$, moreover we have $\Omega_2^{R(S,j),j}(X,Y)
= -8(n-1)S(X,jY)$. In summary:

\begin{lemma}\label{lem:}
Formula \eqnref{eq:symppsi} defines an element $R(S,j) \in \CC(V,\Omega)$ which
is of Weyl type and $S \mapsto R(S,j)$ is a $U(V,\Omega,j)$ equivariant map
$\scrV(V,\Omega,j) \longto \CC(V,\Omega)$ with image in the Weyl tensors.
Moreover 
\[
S(X,Y) = \frac1{8(n-1)} \Omega_2^{R(S,j),j}(X,jY).
\]
Under the action of $h \in Sp(V,\Omega)$ we have
\[
h \cdot (\scrV(V,\Omega,j)) = \scrV(V, \Omega, hjh^{-1})\quad \mathrm{and}\quad 
h\cdot (R(S,j)) = (h\cdot R)(h\cdot S, hjh^{-1}).
\]
\end{lemma}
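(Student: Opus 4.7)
The plan is to mirror the strategy used in the pseudo-Riemannian Lemma \ref{lem:W9new}, adapted to the symplectic setting via Lemma \ref{lem:sympcurvsym}. Three items must be established: that $R(S,j)$ lies in $\CC(V,\Omega)$ and is of Weyl type; the inversion formula for $S$ in terms of $\Omega^{R(S,j),j}_2$; and the $Sp(V,\Omega)$-equivariance of the construction. The two elementary identities that I expect to use repeatedly are $\Omega(X,jY)=\Omega(Y,jX)$, coming from $j \in \fraksp(V,\Omega)$, and $S(X,jY)=S(jX,Y)=-S(Y,jX)$, coming from $S \in \scrV(V,\Omega,j)$ together with the skew-symmetry of $S$.

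First I would check the three hypotheses of Lemma \ref{lem:sympcurvsym} for the bilinear form $A(X,Y,Z,T) := \Omega(R(S,j)(X,Y)Z,T)$ read off from \eqnref{eq:symppsi}: the antisymmetry in $X,Y$ and the symmetry in $Z,T$ are visible term by term using the identities just mentioned, while the Bianchi identity in $X,Y,Z$ reduces to a cyclic cancellation in which the five terms organise into three pairs. This places $R(S,j)$ in $\CC(V,\Omega)$.

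Next, I would show $R(S,j) \in \WW(V,\Omega)$ by computing the Ricci trace $\ric(R(S,j))(Y,Z) = \Tr(X \mapsto R(S,j)(Y,X)Z)$ in an $\Omega$-dual pair of bases; the five partial contractions collapse to zero thanks to $\Tr j = 0$ and standard trace identities. By the same contraction technique I would then evaluate $\Omega^{R(S,j),j}_1(X,Y) = \Tr_\bbR(R(S,j)(X,Y) \circ j)$: three of the five terms vanish again by $\Tr j = 0$, and the remaining two collapse into a multiple of $S(X,jY)$, producing $\Omega^{R(S,j),j}_1(X,Y) = 4(n-1) S(X,jY)$. The substitution $(X,Y) \mapsto (jX,jY)$ combined with $S(jX,jY) = -S(X,Y)$ and $j^2 = -\id$ then gives $\Omega^{R(S,j),j}_2(X,Y) = -8(n-1) S(X,jY)$, and replacing $Y$ by $-jY$ yields the stated inversion formula.

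Finally the equivariance is by direct inspection: the $Sp(V,\Omega)$-action sends $S$ to $(h \cdot S)(X,Y) = S(h^{-1}X, h^{-1}Y)$ and $j$ to $hjh^{-1}$, and substituting these into \eqnref{eq:symppsi} reproduces $h \cdot R(S,j)$ term by term using only $\Omega(hX,hY) = \Omega(X,Y)$. The equality $h \cdot \scrV(V,\Omega,j) = \scrV(V,\Omega, hjh^{-1})$ is immediate from the defining condition $S(jX,jY) = -S(X,Y)$, and the $U(V,\Omega,j)$-equivariance is the special case $hjh^{-1}=j$. The main obstacle I anticipate is the bookkeeping in the Bianchi identity and in the two trace computations: each involves summing five terms whose arguments intertwine $\Omega$ and $S$, and obtaining the correct coefficient $-8(n-1)$ requires careful sign-tracking; once these mechanical verifications are carried out, the rest of the statement follows from the general shape of the argument.
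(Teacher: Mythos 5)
Your overall strategy is the right one and is essentially what the paper leaves implicit: verify the hypotheses of Lemma \ref{lem:sympcurvsym}, contract to get Ricci-flatness, compute $\Omega_1^{R(S,j),j}$ term by term, and read the equivariance off the formula. The symmetry checks, the Bianchi identity and the equivariance all go through as you describe; for the Ricci trace note that the five contractions do not vanish individually but contribute $-2S(X,Y)$, $0$, $S(X,Y)$, $0$, $S(X,Y)$ and cancel, the two zeros coming from $\Tr(js)=0$ (where $S=\Omega(s\cdot,\cdot)$) and $\Tr j=0$.

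The step that fails is the evaluation of $\Omega_1^{R(S,j),j}(X,Y)=\Tr\bigl(R(S,j)(X,Y)\circ j\bigr)$. It is not true that three of the five terms of \eqnref{eq:symppsi} are killed by $\Tr j=0$: none of them vanish. The term $-2\Omega(Z,jT)S(X,jY)$ corresponds to the operator $Z\mapsto 2S(X,jY)\,jZ$, so after composing with $j$ it contributes $-2S(X,jY)\Tr(\Id)=-4n\,S(X,jY)$ (it is the analogue of the $2S(X,jY)\Tr(\Id)$ term in Lemma \ref{lem:W9new}, not of the $\Tr(jsj)$ term), and each of the four remaining rank-one terms contributes $-S(X,jY)$ (for instance $\Omega(X,jZ)S(Y,jT)$ gives, after composition with $j$, the operator $Z\mapsto\Omega(X,Z)\,j\sigma_Y$ with $S(Y,\cdot)=\Omega(\sigma_Y,\cdot)$, whose trace is $\Omega(X,j\sigma_Y)=S(Y,jX)=-S(X,jY)$). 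The total is $\Omega_1^{R(S,j),j}(X,Y)=-4(n+1)S(X,jY)$, hence $\Omega_2^{R(S,j),j}(X,Y)=8(n+1)S(X,jY)$ and $S(X,Y)=-\tfrac1{8(n+1)}\Omega_2^{R(S,j),j}(X,jY)$, not the coefficient $\tfrac1{8(n-1)}$ printed in the Lemma, which you have reproduced rather than derived. One can confirm this on $V=\bbR^4$ with the standard $\Omega_0$, $j_0$ and $S=e^1\wedge e^2-e^3\wedge e^4$, where a direct computation gives $\Tr\bigl(R(S,j)(e_1,e_4)\circ j\bigr)=12=-4(n+1)S(e_1,je_4)$. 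The discrepancy is harmless downstream (Proposition \ref{prop:sympl} only uses that the constant is non-zero, so that $P_j$ is a projection with image $R(\scrV(V,\Omega,j),j)$), but a proof of the Lemma as stated must either correct the constant or the signs in \eqnref{eq:symppsi}; as written, your computation does not close.
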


\begin{definition}
For arbitrary $R \in \CC(V,\Omega)$ we  define $S^{R,j}(X,Y) =
\frac{1}{8(n-1)}\Omega_2^{R,j}(X,jY)$ and $P_j(R) = R(S^{R,j},j)$.
\end{definition}

\begin{lemma}
$P_j$ is a linear map from $\CC(V,\Omega)$ to itself satisfying $P_j \circ P_j =
P_j$ and with image in the curvatures of Weyl type. $ j \mapsto P_j$ is
$Sp(V,\Omega)$-equivariant.
\end{lemma}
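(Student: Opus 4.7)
The plan is to verify the three assertions directly from the formula $P_j(R) = R(S^{R,j},j)$ together with the preceding Lemma. Linearity is immediate: for each fixed $j$, the map $R \mapsto \Omega_1^{R,j}(X,Y) = \Tr_\bbR(R(X,Y)\circ j)$ is linear in $R$, hence so is $\Omega_2^{R,j}$ and thus $S^{R,j}$; on the other side, the defining formula \eqnref{eq:symppsi} for $R(S,j)$ is linear in $S$ for $j$ fixed. Composing gives linearity of $P_j$. The fact that $\Image P_j$ consists of Weyl-type curvatures is a direct consequence of the previous Lemma, which asserts that $R(S,j) \in \WW(V,\Omega)$ for every $S \in \scrV(V,\Omega,j)$.

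The real content is the idempotence $P_j \circ P_j = P_j$. The key is the inversion identity from the previous Lemma, namely
\[
S(X,Y) = \frac1{8(n-1)} \Omega_2^{R(S,j),j}(X,jY),
\]
which says that $S \mapsto R(S,j)$ is a right inverse to $R \mapsto S^{R,j}$ on $\scrV(V,\Omega,j)$. I would first set $R' = P_j(R) = R(S^{R,j},j)$, then substitute $S = S^{R,j}$ into the inversion identity to obtain $S^{R',j} = S^{R,j}$. Applying $P_j$ once more gives $P_j(R') = R(S^{R',j},j) = R(S^{R,j},j) = R' = P_j(R)$, as required.

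For the equivariance, I would first establish how $S^{R,j}$ transforms under $h \in Sp(V,\Omega)$. Using the natural action $(h\cdot R)(X,Y) = h R(h^{-1}X, h^{-1}Y)h^{-1}$ and the trace invariance $\Tr(h A h^{-1}) = \Tr A$, a short computation shows
\[
\Omega_i^{h\cdot R,\,hjh^{-1}}(X,Y) = \Omega_i^{R,j}(h^{-1}X, h^{-1}Y)\quad (i=1,2),
\]
and hence $S^{h\cdot R,\,hjh^{-1}}(X,Y) = S^{R,j}(h^{-1}X,h^{-1}Y) = (h\cdot S^{R,j})(X,Y)$. Combining this with the equivariance $h\cdot R(S,j) = R(h\cdot S, hjh^{-1})$ from the previous Lemma yields
\[
P_{hjh^{-1}}(h\cdot R) = R\bigl(S^{h\cdot R,\,hjh^{-1}}, hjh^{-1}\bigr) = R(h\cdot S^{R,j}, hjh^{-1}) = h \cdot R(S^{R,j},j) = h\cdot P_j(R),
\]
which is the required $Sp(V,\Omega)$-equivariance. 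The only subtle point is keeping the conventions for the $Sp(V,\Omega)$-action consistent between $R$, $S$ and $j$; once this is pinned down, the rest is bookkeeping, so I do not expect a genuine obstacle.
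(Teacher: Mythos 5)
Your argument is correct and is exactly the verification the paper leaves implicit (the lemma is stated without proof, in parallel with the ``obvious'' Lemma \ref{lem:obvious} of the pseudo-Riemannian case): idempotence follows from the inversion identity $S^{R(S,j),j}=S$ of the preceding lemma, and equivariance from the transformation law $\Omega_2^{h\cdot R,\,hjh^{-1}}(X,Y)=\Omega_2^{R,j}(h^{-1}X,h^{-1}Y)$ combined with $h\cdot R(S,j)=R(h\cdot S,hjh^{-1})$. The only small point worth adding is the one-line check that $S^{R,j}$ actually lies in $\scrV(V,\Omega,j)$ (it satisfies $S^{R,j}(jX,jY)=-S^{R,j}(X,Y)$ because $\Omega_2^{R,j}(jX,jY)=-\Omega_2^{R,j}(X,Y)$), so that $R(S^{R,j},j)$ is defined at all.
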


\begin{proposition} \label{prop:sympl}
Let $(M,\omega)$ be a symplectic manifold of dimension $2n \ge4$ with a
symplectic connection $\nabla$. Then the closed $2$-form
$\omega^{J(M,\omega,\nabla)}$ is of type $(1,1)$ for each of the $J^\pm_\nabla$
(i.e. equation \eqnref{eq:Appendix} is satisfied) if and only if the curvature
$R^\nabla$ is of Ricci type.
\end{proposition}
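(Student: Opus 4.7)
The plan is to adapt the projection-operator argument used to prove Proposition \ref{prop:Appendix} to the symplectic setting, now using the two-piece decomposition $\CC(V,\Omega) = \EE(V,\Omega) \oplus \WW(V,\Omega)$ into Ricci and Weyl components (rather than the three-piece orthogonal decomposition) and the irreducibility of $\WW(V,\Omega)$ under $Sp(V,\Omega)$ recalled earlier.

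The ``if'' direction is immediate from the last bullet of Proposition \ref{prop:sympl1}: a Ricci-type $R^\nabla$ yields a $2$-form $\omega^{J(M,\omega,\nabla)}$ which is of type $(1,1)$ with respect to both $J^\pm_\nabla$. For the converse, fix $p \in M$, set $V = T_pM$ and $\Omega = \omega_p$, and suppose $\Omega_2^{R^\nabla,j} = 0$ for every $j \in J(V,\Omega)$. A direct computation using \eqref{eqn:Edef} together with $\omega(jX,jY) = \omega(X,Y)$ gives $\Omega_2^{E(r),j} = 0$ for every $r \in S^2 V^*$ and every $j$. Hence $S^{E(r),j} = 0$ and $P_j(E(r)) = R(0,j) = 0$, so $P_j$ annihilates $\EE(V,\Omega)$ and descends to a projection on $\WW(V,\Omega)$.

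Writing $R^\nabla = E^\nabla + W^\nabla$, the hypothesis becomes $P_j(W^\nabla) = 0$ for every $j$. Set
\[
K = \bigcap_{j \in J(V,\Omega)} \ker\bigl(P_j\vert_{\WW(V,\Omega)}\bigr), \qquad \scrW = \sum_{j \in J(V,\Omega)} \Image\bigl(P_j\vert_{\WW(V,\Omega)}\bigr).
\]
Both are $Sp(V,\Omega)$-invariant subspaces of $\WW(V,\Omega)$, by the equivariance of $j \mapsto P_j$ recorded in the preceding lemma. The identity $\Omega_2^{R(S,j),j}(X,Y) = -8(n-1)S(X,jY)$ shows that $S \mapsto R(S,j)$ is injective on $\scrV(V,\Omega,j)$, and $\scrV(V,\Omega,j)$ is nonzero for $\dim V \ge 4$, so $\Image P_j \ne 0$ and hence $\scrW \ne 0$. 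Irreducibility of $\WW(V,\Omega)$ forces $\scrW = \WW(V,\Omega)$; the same irreducibility makes $K$ either $0$ or all of $\WW(V,\Omega)$, but the latter possibility would imply $P_j\vert_{\WW(V,\Omega)} = 0$ for every $j$, contradicting $\Image P_j \ne 0$. Thus $K = 0$, and $W^\nabla \in K$ gives $W^\nabla = 0$, i.e.\ $R^\nabla$ is of Ricci type.

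The main technical hurdle is the verification that $P_j$ annihilates $\EE(V,\Omega)$; once this is in place, the argument becomes a clean symplectic mimic of the orthogonal one, and notably the dimension-four exception from the pseudo-Riemannian case does not appear here because $\WW(V,\Omega)$ remains irreducible under $Sp(V,\Omega)$ in every dimension $2n \ge 4$.
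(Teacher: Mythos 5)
Your proposal is correct and follows essentially the same route as the paper: the ``if'' direction via the computation showing $\Omega_2^{E(r),j}=0$, and the converse via the equivariant projections $P_j$, the fact that their images span a nonzero $Sp(V,\Omega)$-invariant subspace of $\WW(V,\Omega)$, and the irreducibility of $\WW(V,\Omega)$. Your write-up in fact spells out slightly more carefully than the paper why the intersection of the kernels meets $\WW(V,\Omega)$ trivially (the dichotomy forced by $P_j^2=P_j$ and $\Image P_j\neq 0$), but the argument is the same.
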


\begin{proof}
If $R^\nabla$ is of Ricci type then $R^\nabla= E^\nabla$ and, as mentioned in
Proposition \ref{prop:sympl1}, a direct calculation involving equation
\eqnref{2formsympl} shows that $\Omega_2^{E^\nabla,j} = 0$ for all $j$.

Conversely, assume $\Omega_2^{R^\nabla,j} = 0$ for all $j$ then as in the
pseudo-Riemannian case this means $R^\nabla$ is in the kernel of $P_j$ for all
$j$ and by equivariance, replacing $j$ by $hjh^{-1}$ it follows that $R^\nabla$
is in the intersection $\cap_h \Ker P_{hjh^{-1}}$ which is a subspace of
$\CC(V,\Omega)$ disjoint from the span of the images of the $P_{hjh^{-1}}$. This
is a non-zero $Sp(V,\Omega)$-invariant subspace of the Weyl curvature tensors
and by irreducibility must be the whole of the Weyl curvatures. Hence $R^\nabla$
has no Weyl curvature so is of Ricci type.
\end{proof}

\newpage

\end{document}